\title{Symplectic embeddings of 4-dimensional ellipsoids into polydiscs}
\author{Max Timmons, Priera Panescu and Madeleine Burkhart}
\date{}
\numberwithin{equation}{section}
\newtheorem{theorem}{Theorem}[section]
\newtheorem{proposition}[theorem]{Proposition}
\newtheorem{lemma}[theorem]{Lemma}
\newtheorem{lemma-definition}[theorem]{Lemma-Definition}
\newtheorem{sublemma}[theorem]{Sublemma}
\newtheorem{conjecture}[theorem]{Conjecture}
\theoremstyle{definition}
\newtheorem{definition}[theorem]{Definition}
\newtheorem{remark}[theorem]{Remark}
\newtheorem{example}[theorem]{Example}
\newtheorem*{acknowledgments}{Acknowledgments}
\newcommand{\floor}[1]{\left\lfloor #1 \right\rfloor}
\newcommand{\ceil}[1]{\left\lceil #1 \right\rceil}
\newcommand{\eqdef}{\;{:=}\;}
\newcommand{\C}{{\mathbb C}}
\newcommand{\Q}{{\mathbb Q}}
\newcommand{\R}{{\mathbb R}}
\newcommand{\N}{{\mathbb N}}
\newcommand{\Z}{{\mathbb Z}}
\newcommand{\op}{\operatorname}
\newcommand{\bpm}{\begin{pmatrix}}
\newcommand{\epm}{\end{pmatrix}}
\renewcommand{\epsilon}{\varepsilon}
\begin{document} 
\setcounter{tocdepth}{2}

\maketitle

\begin{abstract}

McDuff and Schlenk have recently determined exactly when a four-dimensional symplectic ellipsoid symplectically embeds into a symplectic ball.  Similarly, Frenkel and M\"uller have recently determined exactly when a symplectic ellipsoid symplectically embeds into a symplectic cube. Symplectic embeddings of more complicated structures, however, remain mostly unexplored.  We study when a symplectic ellipsoid $E(a,b)$ symplectically embeds into a polydisc $P(c,d)$.  We prove that there exists a constant $C$ depending only on $d/c$ (here, $d$ is assumed greater than $c$) such that if $b/a$ is greater than $C$, then the only obstruction to symplectically embedding $E(a,b)$ into $P(c,d)$ is the volume obstruction.  We also conjecture exactly when an ellipsoid embeds into a scaling of $P(1,b)$ for $b$ greater than or equal to $6$, and conjecture about the set of $(a,b)$ such that the only obstruction to embedding $E(1,a)$ into a scaling of $P(1,b)$ is the classical volume.  Finally, we verify our conjecture for $b = \frac{13}{2}$.       

\end{abstract}

\tableofcontents

\section{Introduction}

\subsection{Statement of Results}
\label{sec:results}

Let $(X_0,\omega_0)$ and $(X_1,\omega_1)$ be symplectic manifolds.  A {\em symplectic embedding} of $(X_0,\omega_0)$ into $(X_1,\omega_1)$ is a smooth embedding $\varphi$ such that $\varphi^*(\omega_1)=\omega_0$.  It is interesting to ask  when one symplectic manifold embeds into another.  For example, define the (open) four-dimensional symplectic \textit{ellipsoid}
\begin{equation}
\label{eqn:ellipsoid}
E(a,b) = \left\{(z_1,z_2)\in\C^2\;\bigg|\; \frac{\pi|z_1|^2}{a}+\frac{\pi|z_2|^2}{b}<1\right\},
\end{equation}
and define the (open) \textit{symplectic ball} $B(a) \eqdef E(a,a)$.  These inherit symplectic forms by restricting the standard form $\omega=\sum_{k=1}^2dx_kdy_k$ on $\R^4=\C^2.$  In \cite{MS}, McDuff and Schlenk determined exactly when a four-dimensional symplectic ellipsoid $E(a,b)$ embeds symplectically into a symplectic ball, and found that if $\frac{b}{a}$ is small, then the answer involves an ``infinite staircase" determined by the odd index Fibonacci numbers, while if $\frac{b}{a}$ is large then all obstructions vanish except for the volume obstruction.  

To give another example, define the (open) four-dimensional \textit{polydisc} \begin{equation}
\label{eqn:polydisk}
P(a,b) = \left\{(z_1,z_2)\in\C^2\;\big|\; \pi|z_1|^2< a,\; \pi|z_2|^2< b\right\},
\end{equation}
where $a,b \geq 1$ are real numbers and the symplectic form is again given by restricting the standard symplectic form on $\R^4$.  Frenkel and M\"uller determined in \cite{FM} exactly when a four-dimensional symplectic ellipsoid symplectically embeds into a {\em cube} $C(a) \eqdef P(a,a)$ and found that part of the expression involves the Pell numbers.  Cristofaro-Gardiner and Kleinman \cite{CGK} studied embeddings of four-dimensional ellipsoids into scalings of $E(1,\frac{3}{2})$ and also found that part of the answer involves an infinite staircase determined by a recursive sequence.

Here we study symplectic embeddings of an open four-dimensional symplectic ellipsoid $E(a,b)$ into an open four-dimensional symplectic polydisc $P(c,d)$.  By scaling, we can encode this embedding question as the function \begin{equation}
\label{eqn:inf}
d(a,b) := \op{inf} \lbrace \lambda | E(1,a) \stackrel{s}\hookrightarrow P( \lambda , b \lambda ) \rbrace,
\end{equation}
where, $a$ and $b$ are real numbers that are both greater than or equal to $1$.

The function $d(a,b)$ always has a lower bound, $\sqrt{\dfrac{a}{2b}}$, the volume obstruction. Our first theorem states that for fixed $b$, if $a$ is sufficiently large then this lower bound is sharp, i.e. all embedding obstructions vanish aside from the volume obstruction:

\begin{theorem}
\label{thm: rigidity}
If $a \geq \dfrac{9(b+1)^{2}}{2b}$, then $d(a,b)=\sqrt{\dfrac{a}{2b}}$.
\end{theorem}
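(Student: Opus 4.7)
The plan has two parts: the volume lower bound and a matching upper bound from an explicit construction.

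\emph{Lower bound.} Symplectic maps preserve volume; since $\mathrm{Vol}(E(1,a)) = a/2$ and $\mathrm{Vol}(P(\lambda, b\lambda)) = b\lambda^2$, any symplectic embedding $E(1,a) \stackrel{s}\hookrightarrow P(\lambda, b\lambda)$ forces $\lambda^2 \geq a/(2b)$. Taking the infimum gives $d(a,b) \geq \sqrt{a/(2b)}$, so the real content is the reverse inequality.

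\emph{Upper bound, reduction step.} The central input would be the reduction of Frenkel--M\"uller \cite{FM}, which follows from McDuff's identification of a polydisc with (the complement of a divisor in) a two-point blow-up of a ball: one has
$$E(1,a) \stackrel{s}\hookrightarrow P(c,d) \iff E(1,a) \sqcup B(c) \sqcup B(d) \stackrel{s}\hookrightarrow B(c+d).$$
Applied with $c=\lambda$ and $d=b\lambda$, this reduces Theorem~\ref{thm: rigidity} to constructing, for every $\lambda > \sqrt{a/(2b)}$, a symplectic ball packing
$$E(1,a) \sqcup B(\lambda) \sqcup B(b\lambda) \stackrel{s}\hookrightarrow B((b+1)\lambda).$$
Note that at $\lambda = \sqrt{a/(2b)}$ the volumes on the two sides agree exactly, so we are packing essentially to the volume bound.

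\emph{Upper bound, packing step.} To build this packing, I would expand $E(1,a)$ into its McDuff weight sequence $\{w_i\}$ (determined by the continued fraction expansion of $a$), converting the question into a pure ball-into-ball packing $B(\lambda) \sqcup B(b\lambda) \sqcup \bigsqcup_i B(w_i) \stackrel{s}\hookrightarrow B((b+1)\lambda)$. By McDuff's characterization, such a packing exists if and only if the cohomology class on the appropriate iterated blow-up of $\mathbb{CP}^2$ with prescribed pairings against the exceptional divisors lies in the K\"ahler cone; equivalently, every exceptional curve class $E$ must have non-negative pairing with this class. The numerical hypothesis $a \geq 9(b+1)^2/(2b)$ should be exactly what is needed to verify these pairing inequalities, supplying just enough slack over the raw volume bound.

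\emph{Main obstacle.} The difficult step is the final one: enumerating the relevant exceptional classes and checking the pairing inequality for each. Because the weights $w_i$ are eventually bounded (indeed, once the continued fraction of $a$ produces integer parts $\geq 1$, most weights are equal to $1$), one expects only finitely many ``troublesome'' classes, and locating the extremal one should pin down the constant $\tfrac{9(b+1)^2}{2b}$. An alternative route, which may well be the one chosen by the authors, is to construct the embedding directly by multiple symplectic folding in the spirit of Schlenk and Lalonde--McDuff; in that case the same numerical threshold emerges from counting the folds required to shorten $E(1,a)$ enough to fit inside $P(\lambda, b\lambda)$.
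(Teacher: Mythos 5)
Your volume lower bound is correct, and the idea of reducing to a ball packing via the Frenkel--M\"uller equivalence and McDuff's weight sequence is also the paper's starting point (the paper invokes Frenkel--M\"uller's ECH-capacity criterion, Theorem~\ref{them:(Frenkel-Muller)}, together with the identity $N(1,a^2)=N(a_1,a_1)\#\cdots\#N(a_n,a_n)$). But the proposal stops precisely where the proof has to begin. You write that ``the numerical hypothesis $a \geq 9(b+1)^2/(2b)$ should be exactly what is needed to verify these pairing inequalities,'' with no argument for why, and the alternative you mention (multiple symplectic folding) is not developed at all. Neither sketch produces the stated bound, and the claim that only finitely many exceptional classes could be troublesome is asserted, not shown. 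This is the entire content of the theorem, so as written the proposal is a plan rather than a proof.

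The paper's actual verification is more elementary and avoids both the K\"ahler-cone enumeration and any embedding construction. From the $\#$ decomposition and the explicit form of $N(a,a)$ and $M(1,b)$, the inequality $N(1,a^2)\le \tfrac{a}{\sqrt{2b}}M(1,b)$ reduces to showing
$\sum_i d_i a_i \le \tfrac{a}{\sqrt{2b}}\,d$
whenever the nonnegative integers $d,d_i$ satisfy the index-type constraint
$\sum_i(d_i^2+d_i)\le \tfrac{d^2}{2b}+\tfrac{(1+b)d}{b}+\tfrac{(b-1)^2}{2b}$.
This is handled by splitting into two cases: if $\sum_i d_i^2\le d^2/(2b)$, Cauchy--Schwarz together with $\sum_i a_i^2=a^2$ gives the bound at once; otherwise one uses $a_i\le 1$ to get $\sum_i d_i a_i\le\sum_i d_i$ and bounds the latter from the constraint, which yields the requirement $a\ge \tfrac{b+1}{\sqrt{2b}}\left(2+\tfrac{b+1}{d}\right)$. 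Checking the small-$k$ capacities directly lets one take $d=b+1$, producing the threshold $a\ge 3(b+1)/\sqrt{2b}$, i.e. $a^2\ge 9(b+1)^2/(2b)$. If you want to salvage the exceptional-class route, you would need to reproduce something with the same effect as this two-case estimate; as it stands, the central inequality is unproved.
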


This is an analogue of a result of Buse-Hind \cite{BH} concerning symplectic embeddings of one symplectic ellipsoid into another.  

From the previously mentioned work of McDuff-Schlenk, Frenkel-M\"uller, and Cristofaro-Gardiner-Kleinman, one expects that if $a$ is small then the function $d(a,b)$ should be more rich.  Our results suggest that this is indeed the case.  For example, we completely determine the graph of $d(a,\dfrac{13}{2})$ (see Figure 1):

\begin{theorem}
\label{thm:13/2}
For $b=\dfrac{13}{2}$, $d(a,b) \geq \sqrt{\dfrac{a}{13}}$ and is equal to this lower bound for all $a$ except on the following intervals:

(i) $d(a,\dfrac{13}{2})=1$ for all $a\in\left[1,\frac{25}{2}\right]$

(ii) For $0 \leq k \leq 4$, $k\in\Z$:

\begin{center}

$d(a,b)= \left\{
        \begin{array}{ll}
            \dfrac{2a}{25+2k} & \quad a\in[\alpha_{k},13+2k], \\
            \\
            \dfrac{26+4k}{25+2k} & \quad a\in[13+2k,\beta_{k}],
        \end{array}
    \right.$

\end{center}
where $\alpha_{0} = 25/2$, $\alpha_{1} = 351/25$, $\alpha_{2} = 841/52$, $\alpha_{3} =961/52$, $\alpha_{4} = 1089/52$, $\beta_{0}= 351/25$,  $\beta_{1}= 1300/81$, $\beta_{2}= 15028/841$, $\beta_{3}= 18772/961$, and $\beta_{4}= 2548/121$.

\end{theorem}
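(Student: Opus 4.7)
The plan is to obtain Theorem \ref{thm:13/2} by matching sharp lower bounds from ECH capacities (and possibly refined obstructions coming from J-holomorphic curves in the completed cobordism) with explicit upper bound constructions via ball packing. I would organize the argument in four steps.

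\textbf{Step 1 (Volume lower bound).} The bound $d(a, 13/2) \geq \sqrt{a/13}$ is immediate: the symplectic volume of $E(1,a)$ is $a/2$ and the volume of $P(\lambda, (13/2)\lambda)$ is $(13/2)\lambda^2$, forcing $\lambda \geq \sqrt{a/13}$.

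\textbf{Step 2 (Staircase lower bounds).} Use ECH capacities to obtain the sharper obstructions on the exceptional intervals. Recall that $c_k(E(1,a))$ is the $(k{+}1)$-st smallest element of $\{m + na : m,n \geq 0\}$, and $c_k(P(\lambda, (13/2)\lambda)) = \lambda \cdot c_k(P(1, 13/2))$ is controlled by a lattice-point count of the form $\min\{m + (13/2)n : (m+1)(n+1) > k\}$. For each of the ten listed line segments, I would identify a single index $k = k(j)$ (depending on which exceptional piece, $j = 0, \ldots, 4$) such that the inequality $c_k(E(1,a)) \leq c_k(P(\lambda, (13/2)\lambda))$ yields exactly the formula $\lambda \geq 2a/(25+2k)$ on the sloped piece and $\lambda \geq (26+4k)/(25+2k)$ on the horizontal piece. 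The switchover at $a = 13 + 2k$ reflects which lattice monomial minimizes in the ECH capacity of $E(1,a)$.

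\textbf{Step 3 (Upper bounds).} Construct explicit embeddings realizing the stated values. By a theorem of Frenkel--M\"uller (or equivalently by the multiple blow-up reformulation of McDuff), embedding $E(1,a)$ into $P(c,d)$ is equivalent to embedding a disjoint union of balls (determined by the weight expansion of $a$) together with two extra balls $B(c)$ and $B(d)$ into $B(c+d)$. For the horizontal pieces, where $d(a,13/2) = (26+4k)/(25+2k)$, the embedding need only be produced at the left endpoint $a = 13+2k$; the sloped piece $2a/(25+2k)$ is then realized by trivially rescaling in the first factor. Thus the work reduces to finitely many ball-packing verifications, one per staircase step, each of which can be checked by confronting the weight expansion of $a$ with the Cremona action or by directly exhibiting a full filling.

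\textbf{Step 4 (Volume equality off the staircase).} To conclude $d(a, 13/2) = \sqrt{a/13}$ on all remaining intervals, combine an ellipsoid packing embedding achieving the volume bound (available on the intermediate regions by the same ball-packing translation) with a verification that no ECH capacity $c_k$ produces a larger obstruction there. The latter amounts to a monotonicity/comparison lemma: outside the enumerated intervals, $c_k(E(1,a))/c_k(P(1,13/2)) \leq \sqrt{a/13}$ for every $k$.

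The main obstacle is Step 3: producing the explicit embeddings, which requires checking that specific weight-expansion ball packings fit into $B(c+d)$ minus the two exceptional balls. The combinatorics tends to be delicate at the staircase corners, and guaranteeing sharpness of the packing may force an inductive construction (of the kind used by McDuff--Schlenk) rather than a direct volume-filling argument. The closely related obstruction bookkeeping in Step 2, namely finding the correct index $k(j)$ and confirming that the corresponding obstruction is in fact the \emph{maximum} over all $k$, is the secondary difficulty.
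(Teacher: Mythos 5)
Your overall framing matches the paper's: volume lower bound (Step 1), ECH-capacity lower bounds at chosen indices (Step 2), ball-packing upper bounds via Frenkel--M\"uller's reduction (Step 3), and a verification that nothing beats volume off the staircase (Step 4). But you have misplaced where the real work is, and two steps as written contain genuine gaps.

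First, in Steps 2 and 3 you need to verify inequalities $N(1,a)\le M(\lambda,\lambda b)$ between two \emph{infinite} sequences, both to establish the claimed embeddings at the corner points and to confirm that the single index $k(j)$ you pick really is the maximizer. Your proposal never explains why only finitely many indices need to be checked. The paper handles this by an asymptotic comparison: it computes the counting function $k(1,a,t)$ for the ellipsoid sequence via an Ehrhart polynomial (Proposition~\ref{thm:ehrhart}) and bounds the counting function $l(c,d,t)$ for the polydisc sequence from above by an explicit quadratic in $t$; comparing leading (and, when those tie, subleading) coefficients shows the required inequality holds for all $t$ beyond an explicit threshold, after which only finitely many terms are checked by computer. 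Without such an argument your Steps 2--3 do not close.

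Second, and more seriously, you locate the main obstacle in Step~3 (exhibiting packings at the corners) and treat Step~4 as a routine ``monotonicity/comparison lemma.'' It is the reverse. The corner values in Step~3 follow from the same finite ECH check once the asymptotics are in place, and the sloped/flat pieces follow by the subscaling and monotonicity lemmas (Lemmas~\ref{lem:subscaling} and~\ref{lem:monotonic}) \emph{from} those corner values --- note in particular that the sloped piece on $[\alpha_k,13+2k]$ is obtained by subscaling from the \emph{left} endpoint $\alpha_k$, not by rescaling from $13+2k$ as you suggest. The hard content of the theorem is showing $d(a,\tfrac{13}{2})=\sqrt{a/13}$ on the interior intervals such as $[\tfrac{1300}{81},\tfrac{841}{52}]$, where $a$ ranges over a continuum and you must rule out \emph{every} potential obstruction, not merely check one capacity index. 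The paper does this by reformulating via \eqref{eqn:frenkelmuller} as a ball-packing inequality, encoding potential obstructions as integer vectors $(d;m)$ satisfying \eqref{d1}--\eqref{d3}, and then developing the combinatorial Cremona-reduction machinery (Lemmas~\ref{1}--\ref{4}) and the key estimate Lemma~\ref{lem:keylemma}, which bounds the denominator $q$ and degree $d$ of any would-be obstructive class on each interval. Only then does the problem become a finite computer check. Your phrase ``available on the intermediate regions by the same ball-packing translation'' skips exactly this, and there is no elementary monotonicity argument that replaces it.

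So: right skeleton, but you need (a) a finiteness/asymptotic argument for the capacity comparisons, and (b) a bounded search over obstructive classes --- in the style of McDuff--Schlenk's reduced classes, as the paper does --- to handle the volume-filling intervals. Neither is optional.
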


Interestingly, the graph of $d(a,\frac{13}{2})$ has only finitely many nonsmooth points, in contrast to the infinite staircases in \cite{MS,FM,CGK}.  This appears to be the case for many values of $b$.  For example, we conjecture what the function $d(a,b)$ is for all  $b\geq6,$ see conjecture \ref{conj2}.








\begin{figure}
\includegraphics[height = 75 mm, width = 120 mm]{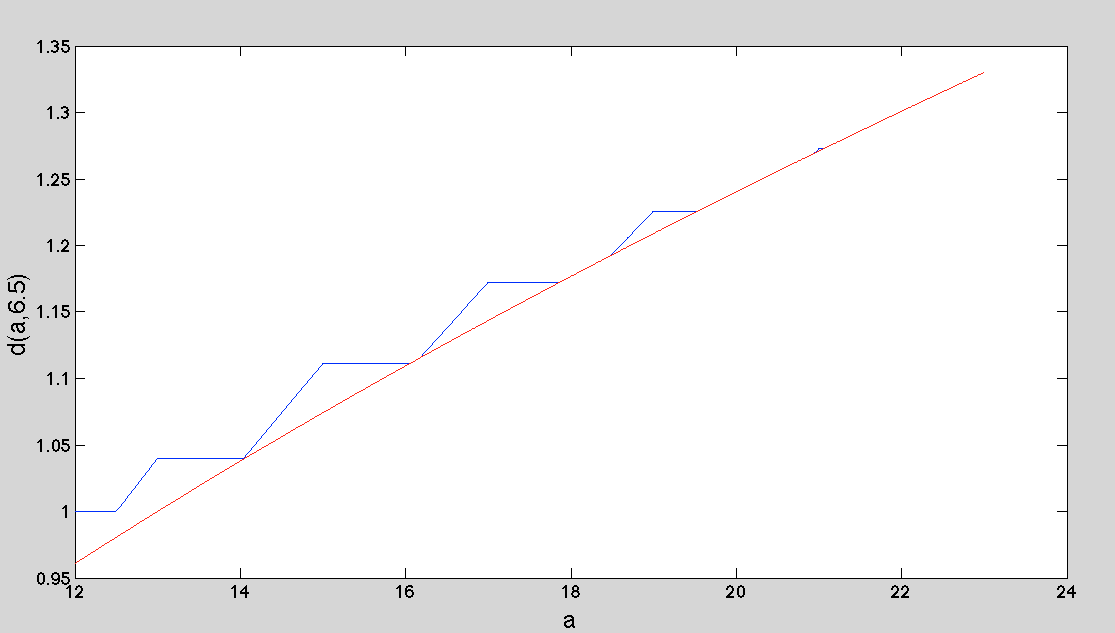}
\caption{The graph of $d(a,\frac{13}{2})$.  The red line represents the volume obstruction.}
\end{figure}


Our proofs rely on the following remarkable theorem of Frenkel and M{\"u}ller \cite{FM}.  Let $N(a,b)$ be the sequence (indexed starting at $0$) of all non-negative integer linear combinations of $a$ and $b$,  arranged with repetitions in non-decreasing order, and let $M(a,b)$ be the sequence whose $k^{th}$ term is 

\begin{center}
$\op{min} \lbrace ma+nb \vert (m+1)(n+1) \geq k+1 \rbrace$
\end{center} 
where $k, m,n \in \Z_{\geq 0}$.  Write  $N(a,b) \leq M(c,d)$ if each term in the sequence $N(a,b)$ is less than or equal to the corresponding term in $M(c,d)$.  Frenkel and M{\"u}ller show that embeddings of an ellipsoid into a polydisc are completely determined by the sequences $M$ and $N$:
\begin{theorem} (Frenkel-M{\"u}ller \cite{FM})
\label{them:(Frenkel-Muller)} 
 There is a symplectic embedding $E(a,b) \stackrel{s}\hookrightarrow P(c,d)$ if and only if $N(a,b) \leq M(c,d)$.
\end{theorem}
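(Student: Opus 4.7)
The plan is to split the proof into necessity and sufficiency and attack each with a different tool.

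For the necessity direction, I would invoke the family of ECH capacities $c_k^{ECH}$ ($k\geq 0$) introduced by Hutchings. These are monotone symplectic invariants: any embedding $X \stackrel{s}\hookrightarrow Y$ forces $c_k^{ECH}(X)\leq c_k^{ECH}(Y)$ for every $k$. Hutchings has in fact computed these invariants for the two families at hand, showing that $c_k^{ECH}(E(a,b))$ equals the $k$-th term of $N(a,b)$ and that $c_k^{ECH}(P(c,d))$ equals the $k$-th term of $M(c,d)$. The necessity of $N(a,b)\leq M(c,d)$ is then immediate from monotonicity, so no further work is needed in this direction.

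For sufficiency, the idea is to reduce to a ball-packing problem via McDuff's weight-decomposition theorem, which states that $E(a,b)$ (with $b/a \in \Q$) symplectically embeds into a four-manifold $X$ if and only if a specific disjoint union of open balls $\bigsqcup_i B(w_i)$ does, where the weights $w_i$ come from the continued-fraction expansion of $b/a$; unpacking these weights, one sees that the multiset $\{w_i\}$ records exactly the sequence $N(a,b)$. The next step is to find an analogous decomposition on the target side: a full-volume symplectic embedding of a disjoint union $\bigsqcup_j B(v_j)$ into $P(c,d)$ whose weights record $M(c,d)$. Once both decompositions are in place, the original embedding problem $E(a,b)\hookrightarrow P(c,d)$ becomes a ball-into-ball packing problem, and by the McDuff--Polterovich theorem (phrased in terms of exceptional classes on a blowup of $\C P^2$ or $S^2\times S^2$) this is controlled by an explicit family of linear inequalities. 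The final step is then a combinatorial check that those inequalities are equivalent to the term-by-term condition $N(a,b)\leq M(c,d)$, after which one extends from rational to irrational parameters by continuity and exhaustion.

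The main obstacle I anticipate is the polydisc decomposition. Unlike an ellipsoid, $P(c,d)$ is not cut out by the standard triangular Delzant polytope, so its decomposition into balls requires a genuinely new construction. One natural route is to realize $P(c,d)$ as a symplectic union of $E(c,d)$ (whose weight decomposition is already standard) together with an explicit collection of balls filling $P(c,d)\setminus E(c,d)$; the rectangular factor $(m+1)(n+1)$ appearing in the definition of $M(c,d)$ is strongly suggestive of such a ``row-times-column'' weight scheme, and producing these extra balls rigorously, together with verifying that their weights really do encode $M(c,d)$, is where I expect the real work to lie. Everything afterwards---the reduction to McDuff--Polterovich, the combinatorial match with $N\leq M$, and the extension to irrational parameters---should be routine bookkeeping once this model is pinned down.
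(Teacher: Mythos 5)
The paper does not prove this statement; it is quoted as a theorem of Frenkel and M\"uller with a citation to \cite{FM}, and the present paper uses it as a black box. So there is no internal proof to compare your attempt against, but I can assess your sketch against what Frenkel--M\"uller actually do (a key step of which the paper records at \eqref{eqn:frenkelmuller}--\eqref{eqn:keyequation}).

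Your necessity direction is fine: ECH monotonicity plus Hutchings' computation that $c_k^{ECH}(E(a,b))=N_k(a,b)$ and $c_k^{ECH}(P(c,d))=M_k(c,d)$ gives $N\leq M$ immediately. The sufficiency direction, however, contains a gap exactly where you anticipate the ``real work.'' You propose to decompose the \emph{target} $P(c,d)$ into a finite union of balls whose weights encode $M(c,d)$. No such decomposition exists, and even if one did it would not help: McDuff's weight-decomposition theorem replaces an ellipsoid \emph{domain} by a disjoint union of balls, and there is no analogue that replaces a target by a disjoint union (the statement $X\hookrightarrow \bigsqcup_j B(v_j)$ is not a packing problem). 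What Frenkel--M\"uller actually do, and what Proposition 1.4 of \cite{FM} (used at \eqref{eqn:frenkelmuller}) records, is the dual move: enlarge the \emph{source} rather than decompose the target. The moment rectangle of $P(c,d)$ is the moment triangle of $B(c+d)$ with two corner triangles of sizes $c$ and $d$ removed, so $E(a,b)\stackrel{s}\hookrightarrow P(c,d)$ if and only if $E(a,b)\sqcup B(c)\sqcup B(d)\stackrel{s}\hookrightarrow B(c+d)$. After this step both sides are ellipsoid/ball configurations inside a ball, McDuff's weight decomposition and the McDuff--Polterovich/Li--Li exceptional-class inequalities apply, and $M(c,d)$ appears on the combinatorial side (via the $\#$ operation, as the paper computes) rather than as literal weights of balls packed into $P(c,d)$. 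Your ``row-times-column'' intuition for $(m+1)(n+1)$ reflects the lattice-point count behind $M(c,d)$, not a ball filling of the polydisc, so that route would stall. The rest of your outline (reduction to ball packing, combinatorial match, continuity to pass from rational to irrational parameters) is the right shape once this one step is corrected.
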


To motivate the sequences $M$ and $N$, note that $N$ is the sequence of {\em ECH capacities} of the symplectic ellipsoid $E(a,b)$ while $M$ is the sequence of ECH capacities of the symplectic polydisc $P(c,d)$.  The ECH capacities are a sequence of nonnegative (possibly infinite) real numbers, defined for any symplectic four-manifold, that obstruct symplectic embeddings.  We will not discuss ECH capacities here; see \cite{H} for a survey.  Theorem~\ref{them:(Frenkel-Muller)} is equivalent to the statement that the ECH capacities give sharp obstructions to embeddings of an ellipsoid into a polydisc.  



\begin{acknowledgments}
We wish to thank Daniel Cristofaro-Gardiner for his helpful explanations, reference suggestions, encouragement and patience. We also thank the NSF, Michael Hutchings and UC Berkeley for providing the opportunity to work on symplectic embedding problems this summer.
\end{acknowledgments}

\section{Proof of Theorem \ref{thm: rigidity}}

\subsection{Weight sequences and the \# operation}
We begin by describing the machinery that will be used to prove Theorem \ref{thm: rigidity}.

Let $a^2$ be a rational number.  In \cite{Mc}, McDuff shows that there is a finite sequence of numbers 
\[W(1, a^{2})= (a_{1},...,a_{n}),\] 
called the  \textit{(normalized) weight sequence for $a^{2}$}, such that $E(1,a^{2})$ embeds into a symplectic ellipsoid if and only if the disjoint union $\sqcup B(W) := \sqcup B(a_{i})$ embeds into that ellipsoid. 

To describe the weight sequence, let 
\begin{equation}
\label{equ: weightseq}
W(a^{2},1)= (X_{0}^{\times \ell_{0}}, X_{1}^{\times \ell_{1}},...,X_{k}^{\times \ell_{k}})
\end{equation}
where $X_{0} > X_{1} >...> X_{k}$ and $ \ell_{k} \geq 2$. The $\ell_{i}$ are the multiplicities of the entries $X_{i}$ and come from the continued fraction expansion
\begin{center} 
$a^{2} = \ell_{0} + \dfrac{1}{\ell_{1} + \dfrac{1}{\ell_{2}+...\dfrac{1}{\ell_{k}}}} := [\ell_{0}; \ell_{1},...,\ell_{k}]$.
\end{center}
The entries of \ref{equ: weightseq} are defined as follows:
\begin{center}
$X_{-1}:=a^{2}, X_{0}=1, X_{i+1}=X_{i-1}-\ell_{i}X_{i}, i \geq 0$.
\end{center}
Important results of the weight sequence include that
\begin{equation}
\label{equ: ai}
\Sigma_{i}a_{i}^{2}=a^{2}
\end{equation} and
\begin{equation}
\Sigma_{i}a_{i}=a^2+1-\dfrac{1}{q}
\end{equation}
where for all $i$, $a_{i} \leq 1$ and $a=\dfrac{p}{q}$.

We will also make use of a helpful operation, $\#$, as in \cite{Mc}. Suppose $s_{1}$ and $s_{2}$ are sequences indexed with $k \in \mathbb{Z}$, starting at 0. Then,
\begin{center}
$(s_{1} \# s_{2})_k = sup_{i+j=k}(s_{1})_i+(s_{2})_j$.
\end{center}
A useful application of $\#$ is the following lemma:
\begin{lemma} (McDuff \cite{Mc})
For all $a,b > 0$, we have $N(a,a) \# N(a,b) = N(a,a+b)$. More generally, for all $\ell \geq 1$, we have $(\#^{\ell}N(a,a)) \# N(a,b) = N(a,b+ \ell a)$.
\end{lemma}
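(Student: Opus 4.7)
The plan is to establish the base case $\ell = 1$, namely $N(a,a) \# N(a,b) = N(a,a+b)$, and then derive the general formula by induction on $\ell$ using the associativity of $\#$ (immediate from the definition, since $((s_1 \# s_2) \# s_3)_k = \sup_{i+j+q=k}(s_1)_i + (s_2)_j + (s_3)_q = (s_1 \# (s_2 \# s_3))_k$).

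For the base case, I would compare, for each $T \geq 0$, the non-decreasing step functions
$$ G(T) := \#\{k : N(a,a+b)_k \leq T\}, \qquad H(T) := \#\{k : (N(a,a) \# N(a,b))_k \leq T\}, $$
since $G = H$ is equivalent to equality of the underlying sequences. The substitution $(m,n) = (p-q,\,q)$ turns $G(T)$ into the lattice count $\#\{(p,q) \in \Z_{\geq 0}^2 : p \geq q,\ pa+qb \leq T\}$. For $H(T)$, I would exploit the fact that in $N(a,a)$ the value $ra$ occupies precisely the indices $i \in [r(r+1)/2,\,(r+1)(r+2)/2)$; combined with the monotonicity of $N(a,b)$, this reduces $(N(a,a) \# N(a,b))_K \leq T$ to the family of constraints $ra + N(a,b)_{K - r(r+1)/2} \leq T$ for every $r$ with $r(r+1)/2 \leq K$, from which a short feasibility argument gives
$$ H(T) = \min_{r \geq 0}\bigl[r(r+1)/2 + f_{a,b}(T - ra)\bigr], \qquad f_{a,b}(T') := \#\{(u,v) \in \Z_{\geq 0}^2 : ua + vb \leq T'\}, $$
with the convention $f_{a,b}(T') = 0$ for $T' < 0$.

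Splitting the set counted by $G(T)$ according to whether $p < r$ or $p \geq r$ immediately gives $G(T) \leq r(r+1)/2 + f_{a,b}(T-ra)$ for every $r$, hence $G(T) \leq H(T)$. For the reverse inequality, both halves of this split are saturated simultaneously precisely when $(r-1)(a+b) \leq T < r(a+b) + b$; this is an interval of $r$ of length $1 + b/(a+b) > 1$, and therefore always contains a non-negative integer $r^*$, and this choice yields $H(T) \leq G(T)$, completing the base case. The inductive step is then routine: assuming the identity holds at level $\ell$ for every second argument, associativity together with the base case gives
$$ (\#^{\ell+1} N(a,a)) \# N(a,b) = (\#^\ell N(a,a)) \# (N(a,a) \# N(a,b)) = (\#^\ell N(a,a)) \# N(a,b+a) = N(a, b + (\ell+1)a). $$

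The main obstacle is the combinatorial bookkeeping in the base case: the derivation of the formula for $H(T)$ must carefully track the triangular multiplicity structure of $N(a,a)$ through the supremum defining $\#$, and exhibiting an integer $r^*$ that simultaneously saturates both halves of the $p < r$ versus $p \geq r$ decomposition relies on the observation that the allowed interval $((T-b)/(a+b),\,T/(a+b) + 1]$ always has length exceeding $1$.
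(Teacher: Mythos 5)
The paper gives no proof of this lemma: it is stated with a citation to McDuff \cite{Mc} and then used as a black box, so there is no in-text argument to compare against. Assessed on its own terms, your proof is correct. The passage to the counting functions $G(T),H(T)$ is legitimate because both sequences are non-decreasing and unbounded, so the step functions determine them. The formula $H(T)=\min_{r\geq0}\bigl[r(r+1)/2+f_{a,b}(T-ra)\bigr]$ is right: since $N(a,a)$ equals $ra$ exactly on indices $i\in[r(r+1)/2,(r+1)(r+2)/2)$ and $N(a,b)$ is non-decreasing, $(N(a,a)\#N(a,b))_K\leq T$ iff $ra+N_{K-r(r+1)/2}(a,b)\leq T$ for every $r$ with $r(r+1)/2\leq K$, which after unwinding $f_{a,b}$ is precisely $K<r(r+1)/2+f_{a,b}(T-ra)$ for all $r\geq 0$. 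The $p<r$ versus $p\geq r$ split of $G(T)$ gives $G\leq H$; and the saturation analysis is correct --- the binding constraint for the first half is $p=q=r-1$, giving $(r-1)(a+b)\leq T$, and for the second half it is $(u,v)=(0,r+1)$, giving $T<r(a+b)+b$ --- so the window in $r$ has length $1+b/(a+b)>1$ and always contains a non-negative integer, giving $G\geq H$. The induction via associativity of $\#$ is immediate. This is a clean, elementary, purely lattice-counting derivation of a fact the paper imports from \cite{Mc}; the one step you might spell out further in a careful write-up is the line ``a short feasibility argument gives $H(T)=\min_r[\cdots]$,'' which hides the observation that for $r$ with $r(r+1)/2>K$ the corresponding constraint is vacuous because $f_{a,b}\geq 0$.
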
This lemma together with the weight sequence and scaling implies that 
\begin{equation}
\label{equ: pound}
N(1,a^{2})= N(a_{1},a_{1})\#...\#N(a_{n},a_{n}). 
\end{equation}
Similar to McDuff \cite{Mc}, this machinery allows us to reduce Theorem \ref{thm: rigidity} to a ball-packing problem.

\subsection{Proof of Theorem \ref{thm: rigidity}}
We begin by noting that the ECH capacities for $B(a)$ are
\begin{center}
$N(a,a) = (0, a, a, 2a, 2a, 2a, 3a, 3a, 3a,...)$.
\end{center}
where the terms $N_{k}(a,a)$ of this sequence are of the form $da$ and for each $d$ there are $d+l$ entries occurring at 

\begin{equation}
\label{equ: N terms}
\dfrac{1}{2}(d^{2}+d) \leq k \leq \dfrac{1}{2}(d^{2}+3d).
\end{equation}

Similarly, for the sequence $\dfrac{a}{\sqrt{2b}}M(1,b)$, each term $\dfrac{a}{\sqrt{2b}}M_k(1,b)$ is of the form $d \dfrac{a}{\sqrt{2b}}$ where 

\begin{equation}
\label{equ: M terms}
k \leq \dfrac{d^{2}}{4b}+\dfrac{(1+b)d}{2b}+\dfrac{b^{2}-2b+1}{4b}.
\end{equation}

By scaling and continuity, we can study $d(a^{2},b)$ with $a^2$ rational. So, we can prove that the volume obstruction is the only obstruction when $a \geq \dfrac{3(b+1)}{\sqrt{2b}}$ by showing that 
\begin{equation}
N(1,a^{2}) \leq \dfrac{a}{\sqrt{2b}} M(1,b)
\end{equation}
for said $a$ values.

By \ref{equ: N terms} and \ref{equ: M terms}, it is therefore sufficient to show that 
\begin{equation}\label{eq1}
\Sigma_{i}d_{i}a_{i} \leq \dfrac{a}{\sqrt{2b}}d
\end{equation}
whenever $d_{1},..,d_{m},d$ are nonnegative integers such that
\begin{equation}\label{eq2}
\Sigma_{i}(d_{i}^{2}+d_{i}) \leq 2(\dfrac{d^{2}}{4b}+\dfrac{(1+b)d}{2b}+\dfrac{b^{2}-2b+1}{4b}).
\end{equation}

We do so by considering the following cases:

\textit{Case 1.} $\Sigma_{i}(d_{i}^{2}) \leq \dfrac{d^{2}}{2b}$. In this case, the Cauchy-Schwartz Inequality along with \ref{equ: ai} implies \ref{eq1}.

\textit{Case 2.} $\Sigma_{i}(d_{i}^{2}) > \dfrac{d^{2}}{2b}$. This case along with \ref{eq2} implies
\begin{center}
$\Sigma_{i}d_{i}a_{i} \leq \Sigma_{i}d_{i} \leq \dfrac{(1+b)d}{b}+\dfrac{b^{2}-2b+1}{2b}$.
\end{center}
So, we need
\begin{center}
$\dfrac{(1+b)d}{b}+\dfrac{b^{2}-2b+1}{2b} \leq \dfrac{a}{\sqrt{2b}}d$.
\end{center}
It follows that
\begin{equation}
\label{equ: initialbound}
a \geq \dfrac{b+1}{\sqrt{2b}}(2+\dfrac{b+1}{d}).
\end{equation}

Now let $d=b+1$. We see that \ref{equ: M terms} is equivalent to

\begin{center}
$k \leq b+1+\dfrac{1}{4b}.$
\end{center}

It is easy to see that $N_{k}(1,a^{2}) \leq \dfrac{a}{\sqrt{2b}} M_{k}(1,b)$ for all such $k$ values. As such, we can apply $d=b+1$ to \ref{equ: initialbound} to get

\begin{equation}
\label{equ: result}
a \geq \dfrac{3(b+1)}{\sqrt{2b}},
\end{equation}
hence the desired result. $\qed$

\begin{remark}\label{sharpbound}
We allow $d=b+1$ in the statement of Theorem 1.4. However, if we show $N_{k}(1,a^{2}) \leq \dfrac{a}{\sqrt{2b}} M_{k}(1,b)$ for all $k \leq \dfrac{d^{2}}{4b}+\dfrac{(1+b)d}{2b}+\dfrac{b^{2}-2b+1}{4b}$, then we can use this $d$ in \ref{equ: initialbound} to achieve a sharper bound for a.
\end{remark}

\section{Proof of Theorem \ref{thm:13/2} Part I}
\label{sec:parti}

We begin by computing $d(a,\frac{13}{2})$ on the regions where it is linear.

\subsection{Nondifferentiable points and Ehrhart polynomials}
\label{sec:critical}

We first compute the values of $d$ at certain points.  These will eventually be the points $a$ where $d(a,\frac{13}{2})$ is not differentiable.

\begin{proposition}
\label{prop:criticalvalues}
We have:

\begin{center}
$\begin{array}{lll}
d\left(1,\dfrac{13}{2}\right)=1, & d\left(\dfrac{25}{2},\dfrac{13}{2}\right)=1, & d\left(13,\dfrac{13}{2}\right)=\dfrac{26}{25}, \\
d\left(\dfrac{351}{25},\dfrac{13}{2}\right)=\dfrac{26}{25}, & d\left(15,\dfrac{13}{2}\right)=\dfrac{10}{9}, & d\left(\dfrac{1300}{81},\dfrac{13}{2}\right)=\dfrac{10}{9}, \\
d\left(\dfrac{841}{52},\dfrac{13}{2}\right)=\dfrac{29}{26}, & d\left(17,\dfrac{13}{2}\right)=\dfrac{34}{29}, & d\left(\dfrac{15028}{841}\right)=\dfrac{34}{29}, \\
d\left(\dfrac{961}{52},\dfrac{13}{2}\right)=\dfrac{31}{26}, & d\left(19,\dfrac{13}{2}\right)=\dfrac{38}{31}, & d\left(\dfrac{18772}{961},\dfrac{13}{2}\right)=\dfrac{38}{31}, \\
d\left(\dfrac{1089}{52},\dfrac{13}{2}\right)=\dfrac{33}{26}, & d\left(21,\dfrac{13}{2}\right)=\dfrac{42}{33},\text{     and} & d\left(\dfrac{2548}{121},\dfrac{13}{2}\right)=\dfrac{42}{33}.
\end{array}$
\end{center}
\end{proposition}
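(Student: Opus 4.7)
The plan is to invoke Theorem~\ref{them:(Frenkel-Muller)} to convert each of the fifteen claimed values into a supremum of a ratio of the two sequences $N$ and $M$.  Because $M$ is homogeneous of degree one in its arguments, the condition $N(1,a)\leq M(\lambda,(13/2)\lambda)$ is equivalent to $\lambda\geq N_k(1,a)/M_k(1,13/2)$ for every $k\geq 1$; consequently
\[
d(a,13/2)\;=\;\sup_{k\geq 1}\frac{N_k(1,a)}{M_k(1,13/2)}.
\]
For each pair $(a,c)$ in the proposition I therefore need to (i) exhibit at least one index $k^{\star}$ at which this ratio equals $c$, yielding $d(a,13/2)\geq c$, and (ii) verify $N_k(1,a)\leq c\,M_k(1,13/2)$ at every other $k$, yielding the reverse inequality.

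For (i) I would first tabulate the initial terms of $M(1,13/2)$ and identify the ``corner indices'' where the minimizing pair $(m,n)$ in $\min\{m+(13/2)n:(m+1)(n+1)\geq k+1\}$ jumps to a larger value of $n$.  The first such corner is $k=13$ with $(m,n)=(6,1)$, giving $M_{13}=25/2$; this matches $N_{13}(1,25/2)=25/2$ and already delivers the obstruction $d(25/2,13/2)\geq 1$.  The subsequent corners at $(m,n)=(7,1),(8,1),\ldots$ and then at $n=2$ should produce the other fourteen critical ratios $26/25,10/9,29/26,\ldots,42/33$ one at a time; in each case $c$ appears as a ratio $N_{k^{\star}}(1,a)/M_{k^{\star}}(1,13/2)$ after cancellation of the factor of $2$ introduced by $b=13/2$.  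For values of $a$ such as $351/25$, $1300/81$, or $841/52$ I expect two adjacent critical indices to realize the same ratio $c$, signaling a flat segment in the staircase and pinning down the endpoints $\alpha_k,\beta_k$ of Theorem~\ref{thm:13/2}.

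For (ii) I would reduce the infinite verification to a finite one by an Ehrhart-style lattice-point count.  The number of terms of $N(1,a)$ at most $T$ is the Ehrhart quasi-polynomial of the triangle $\{x+ay\leq T,\ x,y\geq 0\}$, whose leading term is $T^{2}/(2a)$; an analogous but slightly more delicate count applies to $M(1,13/2)$, whose counting function has leading term $T^{2}/26$.  Comparing the two implies $N_k(1,a)/M_k(1,13/2)\to\sqrt{a/13}$, the volume constant; since in every one of the fifteen cases $c\geq\sqrt{a/13}$, the ratio falls below $c$ for all $k$ beyond an explicit effective threshold, leaving only finitely many indices to be dispatched by direct comparison.

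The main obstacle is the bookkeeping in step (ii): for each of the fifteen pairs one must explicitly list the terms of $N(1,a)$ and $M(1,13/2)$ up to the effective cutoff and check $N_k\leq c\,M_k$ there.  Because all fifteen pairs share the same target polydisc $P(1,13/2)$, the table for $M$ can be reused throughout, and the critical indices $k^{\star}$ organize themselves along the finitely many corners identified in step (i).  I anticipate no conceptual surprise beyond careful case analysis; the real labor is the finite tabulation that converts the Ehrhart cutoff into a closed verification of the inequality $N_k\leq c\,M_k$ at every remaining index.
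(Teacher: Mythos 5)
Your proposal is essentially the paper's own proof: both convert the infinite Frenkel--M\"uller check into a comparison of the lattice-point counting functions for $N(1,a)$ and $M(1,13/2)$, use Ehrhart-type asymptotics to establish an effective cutoff index, and then finish by finite (computer) verification, with the lower bound coming from an explicit obstructing index $k^\star$. The only small detail you glossed over, and which the paper handles explicitly, is that for the endpoints where $c=\sqrt{a/13}$ exactly (e.g.\ $a=1300/81$, $c=10/9$) the quadratic coefficients of the two counting functions coincide, so the effective threshold must be extracted from the linear-in-$t$ terms rather than the leading ones.
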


To prove the proposition, the main difficulty comes from the fact that that applying Theorem~\ref{them:(Frenkel-Muller)} in principle requires checking infinitely many ECH capacities.  Our strategy for overcoming this difficulty is to study the growth rate of the terms in the sequences $M$ and $N$.  We will find that in every case needed to prove Proposition~\ref{prop:criticalvalues}, one can bound these growth rates to conclude that only finitely many terms in the sequences need to be checked.  This is then easily done by computer.  The details are as follows:  

\begin{proof}
{\em Step 1.}  For the sequence $N(a,b)$, let $k(a,b,t)$ be the largest $k$ such that $N_{k}(a,b)\le t$.   Similarly, for the sequence $M(c,d)$, let  $l(c,d,t)$ be the largest $l$ such that $M_{l}(c,d)\leq t$.  To show that $E(a,b) \stackrel{s} \hookrightarrow P(c,d)$, by Theorem~\ref{them:(Frenkel-Muller)}, we just have to show that for all $t$, we have $k(a,b,t) \geq l(c,d,t)$.  

{\em Step 2.}  We can estimate $k(a,b,t)$ by applying the following proposition:  

\begin{proposition}
\label{thm:ehrhart}
If $a, b, r,$ and $t$ are all positive integers, then $k(\frac{a}{r},\frac{b}{r},t)=$
\begin{equation}
\label{eqn:ehrhartpolynomial}
\begin{aligned}
\frac{1}{2ab}(tr)^{2}+\frac{1}{2}(tr)\left(\frac{1}{a}+\frac{1}{b}+\frac{1}{ab}\right)+\frac{1}{4}\left(1+\frac{1}{a}+\frac{1}{b}\right)+\frac{1}{12}\left(\frac{a}{b}+\frac{b}{a}+ \frac{1}{ab}\right)\\
 +\frac{1}{a}\sum_{j=1}^{a-1}\dfrac{\xi_{a}^{j(-tr)}}{(1-\xi_{a}^{jb})(1-\xi_{a}^{j})}+\frac{1}{b}\sum_{l=1}^{b-1}\dfrac{\xi_{b}^{l(-tr)}}{(1-\xi_{b}^{la})(1-\xi_{b}^{l})}, 
\end{aligned}
\end{equation}
where $\xi_{d}=e^{\frac{2\pi i}{d}}$.
\end{proposition}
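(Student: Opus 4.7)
The plan is to interpret the claim as a Popoviciu-style lattice-point count and prove it by partial fractions on a generating function.

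First, observe that by the definition of $N$ the quantity $k(a/r, b/r, t)$ equals (up to an off-by-one reflecting whether one counts the initial $0$ term in $N$) the number of $(m,n) \in \Z_{\ge 0}^2$ with $m(a/r) + n(b/r) \le t$, which after clearing denominators is $\#\{(m,n) \in \Z_{\ge 0}^2 : ma + nb \le tr\}$. Introducing a slack variable $k \ge 0$ turns this into the number of $(m,n,k) \in \Z_{\ge 0}^3$ satisfying $ma + nb + k = tr$, so it equals the coefficient of $z^{tr}$ in
\[G(z) \;=\; \frac{1}{(1-z^a)(1-z^b)(1-z)}.\]

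Second, assume $\gcd(a,b) = 1$ (implicit in the statement, since otherwise factors $1 - \xi_a^{jb}$ vanish) and decompose $G$ by partial fractions. The poles are simple at the nontrivial $a$-th roots of unity $\xi_a^j$ and the nontrivial $b$-th roots of unity $\xi_b^l$, together with a pole of order three at $z = 1$. Each simple-pole residue is a routine limit computation, and extracting $[z^{tr}]$ via the expansion $(1 - \xi^{-1}z)^{-1} = \sum_N \xi^{-N} z^N$ turns the contributions from $\xi_a^j$ and $\xi_b^l$ into exactly the two trigonometric sums
\[\tfrac{1}{a}\sum_{j=1}^{a-1}\frac{\xi_a^{-jtr}}{(1-\xi_a^{jb})(1-\xi_a^j)} \quad\text{and}\quad \tfrac{1}{b}\sum_{l=1}^{b-1}\frac{\xi_b^{-ltr}}{(1-\xi_b^{la})(1-\xi_b^l)}\]
appearing in the statement.

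The main obstacle is the triple pole at $z = 1$, which yields the entire polynomial part of the formula. I would expand $1 - z^c = c(1-z) - \binom{c}{2}(1-z)^2 + \binom{c}{3}(1-z)^3 - \cdots$ for $c = a, b$, multiply the three resulting series, and invert through order $(1-z)^0$ to obtain
\[G(z) \;=\; \frac{1/(ab)}{(1-z)^3} + \frac{(a+b+1)/(2ab)}{(1-z)^2} + \frac{c_0}{1-z} + O(1),\]
where $c_0 = \tfrac{1}{4}\bigl(1 + \tfrac{1}{a} + \tfrac{1}{b}\bigr) + \tfrac{1}{12}\bigl(\tfrac{a}{b} + \tfrac{b}{a} + \tfrac{1}{ab}\bigr)$. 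Applying the binomial identity $[z^N](1-z)^{-s-1} = \binom{N+s}{s}$ with $N = tr$ to each singular piece reproduces the advertised quadratic in $tr$ together with the stated constant term. As a sanity check, the case $a = b = r = 1$ collapses $G$ to $(1-z)^{-3}$ and the formula to $\binom{t+2}{2}$, matching a direct count of lattice points in the triangle $m + n \le t$. The bookkeeping that produces $c_0$, essentially encoding the identities $\sum_c c$ and $\sum_c c^2$ for binomial coefficients of $1 - z^a$ and $1 - z^b$, is where the real labor lies; everything else is a standard partial-fractions residue calculation.
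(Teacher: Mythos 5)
Your overall route is sound but genuinely different from the paper's: the paper disposes of this proposition in two lines, observing that $k(\frac{a}{r},\frac{b}{r},t)$ counts lattice points in the triangle $m\frac{a}{r}+n\frac{b}{r}\le t$ and then citing \cite[Thm.~2.10]{BR}, whereas you reprove that cited result from scratch by the generating-function/partial-fractions method (which is essentially how Beck and Robins prove it). What your version buys is self-containedness and precision: you make explicit the hypothesis $\gcd(a,b)=1$, which the paper leaves implicit but which is needed both for the poles at the nontrivial roots of unity to be simple and for the denominators $1-\xi_a^{jb}$ in the statement to be nonzero; you also flag the off-by-one (the displayed formula is the number of terms of $N(\frac{a}{r},\frac{b}{r})$ that are $\le t$, i.e.\ $k+1$ in the paper's $0$-indexed convention), which the paper glosses over. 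Your identification of the count with $[z^{tr}]\,(1-z^a)^{-1}(1-z^b)^{-1}(1-z)^{-1}$ and the residue computation at the simple poles are correct and yield exactly the two root-of-unity sums in \eqref{eqn:ehrhartpolynomial}.

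There is, however, an arithmetic slip at the triple pole, and as written that step does not compute. Setting $u=1-z$, the correct expansion is $G=\frac{1}{ab}u^{-3}+\frac{a+b-2}{2ab}u^{-2}+\frac{a^2+b^2+3ab-3a-3b+1}{12ab}u^{-1}+O(1)$; the quantities you display as the $(1-z)^{-2}$ and $(1-z)^{-1}$ coefficients, namely $\frac{a+b+1}{2ab}$ and $\frac14\bigl(1+\frac1a+\frac1b\bigr)+\frac1{12}\bigl(\frac{a}{b}+\frac{b}{a}+\frac{1}{ab}\bigr)$, are instead the coefficients of $(tr)^1$ and $(tr)^0$ in the final polynomial. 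If one literally applies $[z^N](1-z)^{-s-1}=\binom{N+s}{s}$ to your claimed decomposition, the linear term comes out as $\frac{a+b+4}{2ab}N$ rather than $\frac{a+b+1}{2ab}N$, so the two displays in your proof are inconsistent with each other. The fix is routine: with the correct Laurent coefficients, $\frac{1}{ab}\binom{N+2}{2}+\frac{a+b-2}{2ab}(N+1)+\frac{a^2+b^2+3ab-3a-3b+1}{12ab}$, with $N=tr$, regroups exactly into the quadratic, linear and constant terms of \eqref{eqn:ehrhartpolynomial}. So this is a bookkeeping error at precisely the step you describe as ``where the real labor lies,'' not a flaw in the method.
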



\begin{proof}
The number of terms in $N(\frac{a}{r},\frac{b}{r})$ that are less than $t$ is the same as the number of lattice points $(m,n)$ in the triangle bounded by the positive $x$ and $y$ axes and the line $x\frac{a}{r}+y\frac{b}{r} \le t$.  For integral $t$, this number can be computed by applying the theory of ``Ehrhart polynomials". Proposition~\ref{thm:ehrhart} follows by applying \cite[Thm. 2.10]{BR}.       
\end{proof}

We will be most interested in this proposition in the case where $a=r$.  Note that by the last two terms of the formula in Proposition~\ref{thm:ehrhart}, $k(\frac{a}{r},\frac{b}{r},t)$ is a periodic polynomial with period $ab$.  

We also need an argument to account for the fact that Proposition~\ref{thm:ehrhart} is only for integer $t$, whereas the argument in step $1$ involves real $t$.  To account for this we use an asymptotic argument.  Specifically, for $E(1,\frac{a}{r})$ , $a,r\in\Z_{\geq1},$ we bound the right hand side of \eqref{eqn:ehrhartpolynomial} from below by taking the floor function of $t$.  It is convenient for our argument to further bound this expression from below by
\begin{equation}
\label{eqn:finalbound}
\dfrac{c_{1}}{r^{2}}(rt-1)^{2}+\dfrac{c_{2}}{r}(rt-1)+c_{3}.
\end{equation}
where the $c_i$ are the coefficients of the right hand side of \eqref{eqn:ehrhartpolynomial} that do not involve $t$ or $r$.

This is the lower bound that we will use for $k(1,\frac{a}{r},t)$.  




{\em Step 3.}  To get an upper bound $l(c,d,t)$ for $M(c,d)$, recall that $M_{l}(c,d)=min\{cm+dn:(m+1)(n+1)\geq l+1\}$.  For $cm+dn=t$, we solve for $m$ in terms of $n$ and find:
\begin{center}
$\left(\dfrac{t-dn}{c}+1\right)(n+1)-1\geq l$.
\end{center}
Considering $m,n\in\R$, we can take the derivative of the left side of the inequality with respect to $n$ and then set the expression equal to 0 to maximize it.  We do the same with $m$ to obtain:
\begin{center}
$\left(\dfrac{t}{2d}+\dfrac{c}{2d}+\dfrac{1}{2}\right)\left(\dfrac{t}{2c}+\dfrac{d}{2c}+\dfrac{1}{2}\right)-1\geq l$.
\end{center}
By simplifying, we get that an upper bound for $l$ is:
\begin{equation}
\label{eqn:lupper}
l(c,d,t)=\dfrac{t^{2}}{4cd}+\dfrac{(c+d)t}{2cd}+\dfrac{(c-d)^{2}}{4cd}.
\end{equation}

Our strategy now is to check for each point in Proposition~\ref{prop:criticalvalues} that we have  $k(a,b,t)\geq l(c,d,t)$ asymptotically in $t$ for the corresponding $(a,b,c,d)$.  From there, we can check that for a sufficient number of terms, $N(1,a)\leq M(\lambda,\lambda b)$.

{\em Step 4.}  Since the rest of the proof amounts to computation, it is best summarized by the chart below.  In the chart, $k_{t^2}$ and $l_{t^2}$ denote the coefficients of the quadratic terms in the upper and lower bounds from steps $2$ and $3$, while $k_t$ and $l_t$ denote the corresponding coefficients of the linear terms.  

The $t$ column gives a sufficient number to check up to before the asymptotic bounds from the previous three steps are enough.  Note that if the $t^2$ coefficients in any row are equal, then linear coefficients are used to make an asymptotic argument; this explains the appearance of the ``N/A"s in the table.  It is simple to check by computer that the relevant $N$ and $M$ sequences in each row satisfy $N \le M$ once one knows that the problem only has to be checked up to the $t$ in the $t$ column.  Code for this is contained in  \ref{app:embedcode}.

The ECH obstruction column gives an ECH capacity that shows that one cannot shrink $\lambda$ further, i.e. the claimed embeddings are actually sharp.



\begin{center}
\begin{tabular}{|c|c|c|c|c|c|c|}
\hline
$E(1,a)\stackrel{s}\hookrightarrow P(\lambda,\lambda b)$ & $k_{t^{2}}$ & $l_{t^{2}}$ & $k_{t}$ & $l_{t}$ & $t$ & ECH obstruction \\
\hline
$E(1,\frac{25}{2})\stackrel{s}\hookrightarrow P(1,\frac{13}{2})$ & $\frac{1}{25}$ & $\frac{1}{26}$ & N/A & N/A & 51 & 1\\
\hline
$E(1,13)\stackrel{s}\hookrightarrow P(\frac{26}{25},\frac{169}{25})$ & $\frac{1}{26}$ & $\frac{625}{17576}$ & N/A & N/A & 33 & 13 \\
\hline
$E(1,\frac{351}{25})\stackrel{s}\hookrightarrow P(\frac{26}{25},\frac{169}{25})$ & $\frac{25}{702}$ & $\frac{625}{17576}$ & N/A & N/A & 522 & 13 \\
\hline
$E(1,15)\stackrel{s}\hookrightarrow P(\frac{10}{9},\frac{65}{9})$ & $\frac{1}{30}$ & $\frac{81}{2600}$ & N/A & N/A & 29 & 15 \\
\hline
$E(1,\frac{1300}{81})\stackrel{s}\hookrightarrow P(\frac{10}{9},\frac{65}{9})$ & $\frac{81}{2600}$ & $\frac{81}{2600}$ & $\frac{691}{1300}$ & $\frac{27}{52}$ & 272 & 15 \\
\hline
$E(1,\frac{841}{52})\stackrel{s}\hookrightarrow P(\frac{29}{26},\frac{29}{4})$ & $\frac{26}{841}$ & $\frac{26}{841}$ & $\frac{447}{841}$ & $\frac{15}{29}$ & 122 & 17 \\
\hline
$E(1,17)\stackrel{s}\hookrightarrow P(\frac{34}{29},\frac{221}{29})$ & $\frac{1}{34}$ & $\frac{841}{30056}$ & N/A & N/A  & 27 & 17 \\
\hline
$E(1,\frac{15028}{841})\stackrel{s}\hookrightarrow P(\frac{34}{29},\frac{221}{29})$ & $\frac{841}{30056}$ & $\frac{841}{30056}$ & $\frac{7935}{15028}$ & $\frac{435}{884}$ & 32 & 17 \\
\hline
$E(1,\frac{961}{52})\stackrel{s}\hookrightarrow P(\frac{31}{26},\frac{31}{4})$ & $\frac{26}{961}$ & $\frac{26}{961}$ & $\frac{507}{961}$ & $\frac{15}{31}$ & 23 & 19 \\
\hline
$E(1,19)\stackrel{s}\hookrightarrow P(\frac{38}{31},\frac{247}{31})$ & $\frac{1}{38}$ & $\frac{961}{37544}$ & N/A & N/A & 7 & 19 \\
\hline
$E(1,\frac{18772}{961})\stackrel{s}\hookrightarrow P(\frac{38}{31},\frac{247}{31})$ & $\frac{961}{37544}$ & $\frac{961}{37544}$ & $\frac{759}{1444}$ & $\frac{465}{988}$ & 28 & 19 \\
\hline
$E(1,\frac{1089}{52})\stackrel{s}\hookrightarrow P(\frac{33}{26},\frac{33}{4})$ & $\frac{26}{1089}$ & $\frac{26}{1089}$ & $\frac{571}{1089}$ & $\frac{15}{33}$ & 14 & 21 \\
\hline
$E(1,21)\stackrel{s}\hookrightarrow P(\frac{42}{33},\frac{273}{33})$ & $\frac{1}{42}$ & $\frac{121}{5096}$ & N/A & N/A & 26 & 21 \\
\hline
$E(1,\frac{2548}{121})\stackrel{s}\hookrightarrow P(\frac{42}{33},\frac{273}{33})$ & $\frac{121}{5096}$ & $\frac{121}{5096}$ & $\frac{1335}{2548}$ & $\frac{165}{364}$ & 41 & 21 \\
\hline
\end{tabular}
\end{center}
\begin{center}
Table 3.1
\end{center}
\end{proof}

\subsection{The linear steps}
\label{sec:linear steps}

Given the computations from the previous section, the computation of $d(a,\frac{13}{2})$ for all the ``linear steps", i.e. those portions of the graph of $d$ for which $d$ is linear, is straightforward.  Indeed, we have the following two lemmas:

\begin{lemma}
\label{lem:monotonic}
For a fixed $b$, $d(a,b)$ is monotonically non-decreasing.
\end{lemma}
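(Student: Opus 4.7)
The plan is to reduce the monotonicity statement to the geometric fact that the ellipsoids $E(1,a)$ are nested as $a$ grows. Fix $b$ and suppose $a_{1} \leq a_{2}$. From the definition \eqref{eqn:ellipsoid}, a point $(z_{1},z_{2})$ lies in $E(1,a_{1})$ iff $\pi|z_{1}|^{2}+\pi|z_{2}|^{2}/a_{1}<1$; since $1/a_{1}\geq 1/a_{2}$, any such point also satisfies $\pi|z_{1}|^{2}+\pi|z_{2}|^{2}/a_{2}<1$, so $E(1,a_{1})\subseteq E(1,a_{2})$. Both ellipsoids inherit the restriction of the standard symplectic form on $\R^{4}$, so the inclusion map is a symplectic embedding.

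Now fix $\lambda>0$ such that there is a symplectic embedding $E(1,a_{2})\stackrel{s}\hookrightarrow P(\lambda,b\lambda)$. Precomposing with the inclusion from the previous paragraph yields a symplectic embedding $E(1,a_{1})\stackrel{s}\hookrightarrow P(\lambda,b\lambda)$. Thus every $\lambda$ in the set defining $d(a_{2},b)$ also lies in the set defining $d(a_{1},b)$, so taking infima gives $d(a_{1},b)\leq d(a_{2},b)$, which is the claim.

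There is essentially no obstacle here: the argument is a direct consequence of the nesting of the defining sets together with the definition \eqref{eqn:inf} as an infimum over admissible scalings. As an alternative route (should one prefer to avoid invoking the inclusion map directly), one can instead apply Theorem~\ref{them:(Frenkel-Muller)} and check that the sequence $N(1,a)$ is termwise non-decreasing in $a$, which in turn follows from the fact that for any $t\geq 0$ the number of lattice points $(m,n)\in\Z_{\geq 0}^{2}$ with $m+na\leq t$ is non-increasing in $a$; but the inclusion argument is cleaner and coordinate-free.
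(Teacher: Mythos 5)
Your proof is correct and follows the same route as the paper: the paper's one-line proof likewise invokes the embedding $E(1,a)\stackrel{s}\hookrightarrow E(1,a')$ for $a\leq a'$ (realized by the inclusion, as you spell out) and then composes with any embedding into $P(\lambda,b\lambda)$ to compare the infima. Your extra detail and the alternative Frenkel--M\"uller remark are fine but not needed.
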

\begin{proof}
This follows from the fact that $E(1,a)\stackrel{s}\hookrightarrow E(1,a^{'})$ if  $a\leq a^{'}$.
\end{proof}


\begin{lemma}
\label{lem:subscaling}
$d(\lambda a,b)\leq \lambda d(a,b)$ (subscaling)
\end{lemma}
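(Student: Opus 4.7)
The plan is to reduce the inequality to a composition of two elementary symplectic maps: a global linear rescaling of $\R^4$ together with a tautological inclusion of ellipsoids. Given any $\mu > d(a,b)$, I would start with a symplectic embedding $\varphi \colon E(1,a) \hookrightarrow P(\mu, b\mu)$ supplied by the definition of $d(a,b)$, manufacture from it an embedding of $E(1,\lambda a)$ into $P(\lambda\mu, b\lambda\mu)$, and then let $\mu \searrow d(a,b)$ to obtain $d(\lambda a, b) \leq \lambda d(a,b)$. I will restrict attention to $\lambda \geq 1$, the range in which the lemma has content (for $\lambda \leq 1$, monotonicity via Lemma~\ref{lem:monotonic} already gives the stronger bound $d(\lambda a, b) \leq d(a,b) \leq \lambda d(a,b)/\lambda$).

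First I would use the linear dilation $s_\lambda \colon \R^4 \to \R^4$, $z \mapsto \sqrt{\lambda}\, z$, which satisfies $s_\lambda^{*}\omega = \lambda\omega$ and carries $E(1,a)$ onto $E(\lambda,\lambda a)$ and $P(\mu, b\mu)$ onto $P(\lambda\mu, b\lambda\mu)$. Conjugating $\varphi$ by $s_\lambda$ yields $\psi \eqdef s_\lambda \circ \varphi \circ s_\lambda^{-1}$, whose pullback of $\omega$ is $(s_\lambda^{-1})^{*} \varphi^{*} s_\lambda^{*}\omega = (s_\lambda^{-1})^{*}(\lambda\omega) = \omega$, so $\psi$ is a symplectic embedding $E(\lambda,\lambda a) \stackrel{s}\hookrightarrow P(\lambda\mu, b\lambda\mu)$.

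Next I would precompose $\psi$ with the tautological inclusion $E(1, \lambda a) \hookrightarrow E(\lambda, \lambda a)$. This inclusion is valid precisely because $\lambda \geq 1$: the condition $\pi|z_1|^2 + \pi|z_2|^2/(\lambda a) < 1$ implies $\pi|z_1|^2/\lambda + \pi|z_2|^2/(\lambda a) < 1$, so $E(1,\lambda a) \subset E(\lambda, \lambda a)$ as open subsets of $(\R^4, \omega_{\mathrm{std}})$, which makes the inclusion automatically symplectic. Composing with $\psi$ produces the desired symplectic embedding $E(1,\lambda a) \stackrel{s}\hookrightarrow P(\lambda\mu, b\lambda\mu)$, so $d(\lambda a,b) \leq \lambda\mu$; taking the infimum over $\mu > d(a,b)$ completes the argument. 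The entire proof is routine — the only content is noticing the inclusion $E(1,\lambda a) \subset E(\lambda, \lambda a)$ in the hypothesis $\lambda \geq 1$ — so I do not anticipate any genuine obstacle.
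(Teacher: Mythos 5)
Your proof is correct and follows essentially the same route as the paper: the paper's one-line justification, citing the embedding $E(1,\lambda a)\stackrel{s}\hookrightarrow E(\lambda,\lambda a)$ for $\lambda\geq1$, is precisely the inclusion you identify, and your conjugation by the dilation $s_\lambda$ is the implicit scaling step. You have merely spelled out the details that the paper leaves to the reader.
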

\begin{proof}
This follows from the fact that $E(1,\lambda a)\stackrel{s}\hookrightarrow E(\lambda,\lambda a)$ for $\lambda\geq1$.
\end{proof}






By monotonicity, we know that $d(a,\frac{13}{2})$ is constant on the intervals:
\[a\in\left[1,\dfrac{25}{2}\right], \left[13,\dfrac{351}{25}\right], \left[15,\dfrac{1300}{81}\right], \left[17,\dfrac{15028}{841}\right], \left[19,\dfrac{18772}{961}\right], \left[21,\dfrac{2548}{121}\right].\]

We now explain why for $0\leq k\leq4, k\in\Z$,
\begin{center}
$d(a,\frac{13}{2})=\dfrac{2a}{25+2k}\quad a\in[\alpha_{k},13+2k]$,
\end{center}
where $\alpha_{0}=\dfrac{25}{2}, \alpha_{1}=\dfrac{351}{25}, \alpha_{2}=\dfrac{841}{52}, \alpha_{3}=\dfrac{961}{52},$ and $\alpha_{4}=\dfrac{1089}{52}$.

Given the critical points we have determined, along with the subscaling lemma, we have $\dfrac{2a}{25+2k}$ as an upper bound for $d(a,\frac{13}{2})$ on the above intervals.

\subsection{Intervals on which $d(a,\frac{13}{2})$ is linear}
\label{sec:linear}

We also know that:
\begin{center}
$d(a,\frac{13}{2})=sup\left\{\dfrac{N_{x}(1,a)}{M_{x}(1,\frac{13}{2})}:x\in\N\right\}\geq\dfrac{N_{l}(1,a)}{M_{l}(1,\frac{13}{2})}$ for any $l$.
\end{center}

Here is a representative example of our method:

\begin{example}To illustrate how this can give us a suitable lower bound, consider the case where $x=13$:
\begin{center}
$sup\left\{\dfrac{N_{x}(1,a)}{M_{x}(1,\frac{13}{2})}:x\in\N\right\}\geq\dfrac{N_{13}(1,a)}{M_{13}(1,\frac{13}{2})}=\dfrac{2a}{25}$
\end{center}

for $a\in\left[\frac{25}{2},13\right]$.

This lower bound equals the upper bound given by Lemma \ref{lem:subscaling}, so we have proven Theorem \ref{thm:13/2} for $a\in[\frac{25}{2},13]$.
\end{example}

The general method is similar: given $a\in[\alpha_{k},13+2k]$, we can find an $l$ such that:
\begin{center}
$\dfrac{N_{l}(1,a)}{M_{l}(1,\frac{13}{2})}=\dfrac{2a}{25+2k}.$
\end{center}

Such obstructing values of $l$ are given in the following table:

\begin{center}
\begin{tabular}{|c|c|c|}
\hline
$k$ & $\dfrac{2}{25+2k}$ & $l$ \\
\hline
0 & $\frac{2}{25}$ & 13 \\
\hline
1 & $\frac{2}{27}$ & 15 \\
\hline
2 & $\frac{2}{29}$ & 17 \\
\hline
3 & $\frac{2}{31}$ & 19 \\
\hline
4 & $\frac{2}{33}$ & 21 \\
\hline
\end{tabular}
\end{center}
\begin{center}
Table 3.2
\end{center}

Given $a\in[\alpha_{k},13+2k]$ for each integer $k\in[0,4]$, we have found that the upper and lower bounds of $d(a,\frac{13}{2})$ equal $\dfrac{2a}{25+2k}$. Thus, we have proven our claim for these intervals.

\section{Proof of Theorem~\ref{thm:13/2} Part II}

To complete the proof of Theorem~\ref{thm:13/2}, we need to show that aside from the linear steps described in the previous section, the graph of $d(a,\frac{13}{2})$ is equal to the graph of the volume obstruction.  To do this, we adapt some of the ideas from \cite{MS} in a purely combinatorial way.  This will be needed to complete the proof of Theorem~\ref{thm:13/2}.  Our combinatorial perspective on the techniques from \cite{MS} borrows many ideas from \cite{Mc}.

\subsection{Preliminaries}

This section collects the main combinatorial machinery that will be used to complete the proof.  The basic idea behind our proof will be to reduce to a ball packing problem, as in the proof of Theorem~\ref{thm: rigidity}.  The machinery we develop here will be useful for approaching this ball packing problem.   

We begin with two definitions:

\begin{definition} Let $Cr(d,d_i)=(d',d_i')$ where $d'=2d-d_1-d_2-d_3, d_i'=d-d_j-d_k$ for ${i,j,k}={1,2,3}$ and $d_i'=d_i\text{ }$ for all $i\geq4.$ We say $Cr$ is the {\em Cremona transform}.
\end{definition}

\begin{definition} We say $(d,d_i) \in \R^{1+n}$ is:

(i) $positive$ if $d,d_i\geq0$ for all $i$,

(ii) $ordered$ if $d_i,d_{i+1}\neq0$ implies $d_i\geq d_{i+1}$ and $d_i\neq0,d_j=0$ implies $i<j$,

(iii) $reduced$ if positive, ordered, and $d\geq d_1+d_2+d_3$.
\end{definition}

\begin{remark} It will be important to note that $Cr(Cr(d,d_i))=(d,d_i)$.
\end{remark}

We now define a product analogous to the intersection product in \cite{MS}:

\begin{definition} $(x,x_i)\cdot(y,y_i)=xy-\sum_ix_iy_i.$\end{definition}

We also define a vector $-K \in \R^{1+n}$ that is motivated by the the standard anti-canonical divisor in the $M$-fold blow up of $\C P^2$.
\begin{definition} $-K=(3,1,1,\dots,1)$\end{definition}

The following is a combinatorial analogue of ``intersection positivity" that will be useful:

\begin{lemma}\label{1} If $(x,x_i)$ is reduced, $(d,d_i)$ is positive, $-K\cdot(d,d_i)\geq0$, and $d\geq max(d_i),$ then $(x,x_i)\cdot(d,d_i)\geq0$.\end{lemma}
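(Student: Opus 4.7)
The strategy is to reduce to the case where $(d_i)$ is sorted in decreasing order via the rearrangement inequality, then apply summation by parts and bound the resulting partial sums using the three hypotheses. Since $(x_i)$ is non-increasing by the ordered hypothesis, the rearrangement inequality gives $\sum_i x_i d_i \le \sum_i x_i d_i^{*}$, where $d_i^{*}$ is the decreasing rearrangement of the $d_i$. None of the three hypotheses on $(d,d_i)$---positivity, $-K\cdot(d,d_i) \ge 0$, and $d \ge \max d_i$---depends on the order of the $d_i$. So I may assume $d_1 \ge d_2 \ge \cdots \ge 0$, padding with zeros if necessary so that there are at least three entries.

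Next, set $S_0 = 0$ and $S_i = d_1 + \cdots + d_i$. Summation by parts gives
\begin{equation*}
\sum_{i=1}^n x_i d_i \;=\; \sum_{i=1}^{n-1}(x_i - x_{i+1})\, S_i \;+\; x_n\, S_n,
\end{equation*}
in which $x_i - x_{i+1} \ge 0$ follows from the ordered condition (nonzero entries of $(x_i)$ precede any zero entries) and $x_n \ge 0$ by positivity. I then bound the partial sums: the hypothesis $d \ge \max d_i$ gives $S_1 \le d$ and $S_2 \le 2d$, while $-K\cdot(d,d_i) = 3d - \sum_j d_j \ge 0$ gives $S_i \le 3d$ for every $i \ge 3$. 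Plugging these bounds in and telescoping,
\begin{equation*}
\sum_{i=1}^n x_i d_i \;\le\; d(x_1-x_2) + 2d(x_2-x_3) + 3d(x_3 - x_n) + 3d\, x_n \;=\; d(x_1+x_2+x_3).
\end{equation*}
Finally, the reduced hypothesis says $x \ge x_1+x_2+x_3$, so $\sum x_i d_i \le dx$, giving $(x,x_i)\cdot(d,d_i) = dx - \sum x_i d_i \ge 0$.

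The only subtle point is the alignment of the ``$3$'' appearing in three places at once: the coefficient of $d$ in $-K$, the number of terms on the right of the reduced inequality $x \ge x_1+x_2+x_3$, and the cutoff index above which the individual bound $S_i \le id$ is superseded by the global bound $S_i \le 3d$. The telescoping step is rigged precisely so that these three hypotheses combine to yield the bound $d(x_1+x_2+x_3)$; this combinatorial cancellation is what I expect to be the only content of the proof worth checking carefully.
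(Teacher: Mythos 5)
Your proof is correct, and it takes a genuinely cleaner route than the paper's. Both arguments begin the same way, reducing by the rearrangement inequality to the case where the $d_i$ are non-increasing. From there the paper splits into two cases on $x_3$: when $x_3 = 0$ it uses only $d \geq \max d_i$ together with $x \geq x_1 + x_2$, and when $x_3 > 0$ it normalizes (WLOG) to $x_3 = 1$, sets $e_1 = x_1 - 1$, $e_2 = x_2 - 1$, and runs a chain of inequalities that threads the three hypotheses together. Your Abel-summation argument handles both cases uniformly: writing $\sum x_i d_i = \sum (x_i - x_{i+1}) S_i + x_n S_n$ and then feeding in $S_1 \leq d$, $S_2 \leq 2d$, $S_i \leq 3d$ exposes exactly how the three hypotheses combine, and the telescoping lands cleanly on $d(x_1+x_2+x_3) \leq dx$. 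This avoids both the case split and the scaling step; it also makes your closing observation precise --- the three occurrences of the number $3$ (in $-K$, in the definition of reduced, and in the index past which the partial-sum bound saturates) are the whole content of the lemma, and summation by parts is the natural device to align them. One should note in passing that the final step $d(x_1+x_2+x_3) \leq dx$ also uses $d \geq 0$ from positivity, which you have.
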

\begin{proof} Let $(d',d_i')$ be ordered $(d,d_i)$. As 
\[(x,x_i)\cdot (d,d_i)\geq (x,x_i)\cdot (d',d_i')\] we can assume without loss of generality that $(d,d_i)$ is ordered. If $x_3=0$ then $x_i=0$ for $i>3$ and \[(x,x_i)\cdot(d,d_i)=xd-x_1d_1-x_2d_2\]
  as $d\geq max(d_i)$.  We know that this expression is greater than or equal to \[(x-x_1-x_2)d\]
as $(x,x_i)$ is reduced, and this is greater than or equal to $0$.  

We now assume without loss of generality that $x_3=1$. Hence, $x_i\leq1$ for $i\geq3$. Let $e_1=x_1-1, e_2=x_2-1.$ Then
\[xd\geq(3+e_1+e_2)d\]
as $(x,x_i)$ is reduced.  This expression is equal to 
\[3d+de_1+de_2\]
as $d\geq d_1,d_2$.  We now have the following chain of inequalities: 
\[3d+de_1+de_2 \geq3d+d_1e_1+d_2e_2\]
\[\geq\sum_id_i+d_1e_1+d_2e_2\]
\[=d_1x+d_2x+\sum_{i\geq3}d_i\]
\[\geq d_1x_1+d_2x_2+\sum_{i\geq3}x_id_i\]
\[=\sum_id_ix_i.\]\end{proof}

In \cite{MS}, Cremona transformations preserve the intersection product. Here we prove an analogous result. 

\begin{lemma}\label{2}$Cr(x,x_i)\cdot Cr(y,y_i)=(x,x_i)\cdot(y,y_i).$\end{lemma}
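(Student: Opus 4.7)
The plan is to reduce the identity to a short direct computation. Since the Cremona transform fixes every coordinate with index $i \geq 4$, the contributions of those indices to each side of the equation agree term by term. So it suffices to verify
\[
x' y' - \sum_{i=1}^{3} x_i' y_i' \;=\; xy - \sum_{i=1}^{3} x_i y_i,
\]
where $x' = 2x - x_1 - x_2 - x_3$, $x_i' = x - x_j - x_k$ for $\{i,j,k\}=\{1,2,3\}$, and similarly for $y$.

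To keep the bookkeeping clean, I would introduce $S_x = x_1 + x_2 + x_3$, $S_y = y_1 + y_2 + y_3$, and $P = x_1 y_1 + x_2 y_2 + x_3 y_3$. A one-line expansion then gives
\[
x'y' \;=\; (2x - S_x)(2y - S_y) \;=\; 4xy - 2xS_y - 2yS_x + S_x S_y.
\]
For the sum on the left, each factor expands as
\[
x_i' y_i' \;=\; xy - x(y_j + y_k) - y(x_j + x_k) + (x_j + x_k)(y_j + y_k).
\]
Summing over $i = 1, 2, 3$ uses two elementary counting identities: $\sum_i (x_j + x_k) = 2 S_x$ and (the one piece requiring a little care)
\[
\sum_{i=1}^{3}(x_j + x_k)(y_j + y_k) \;=\; 2P + (S_x S_y - P) \;=\; P + S_x S_y,
\]
since each diagonal product $x_m y_m$ appears in exactly two of the three summands while each cross product $x_m y_n$ with $m \neq n$ appears in exactly one.

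Putting these together gives $\sum_{i=1}^{3} x_i' y_i' = 3xy - 2xS_y - 2yS_x + S_x S_y + P$, and subtracting from $x'y'$ yields $xy - P$, exactly as required. The only real ``obstacle'' is the cross-term count above, which is why I would write out that identity explicitly rather than gesturing at it. A more conceptual alternative, if a cleaner proof is desired, is to note that on the first four coordinates $Cr$ is a linear involution represented by an explicit $4 \times 4$ matrix $A$; the identity then reduces to $A^T Q A = Q$ for the bilinear form $Q = \mathrm{diag}(1, -1, -1, -1)$, which is a one-shot matrix multiplication.
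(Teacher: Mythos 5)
Your proof is correct and takes essentially the same route as the paper's: a direct algebraic expansion of both sides, using that $Cr$ fixes coordinates with index $\geq 4$. Your bookkeeping via $S_x$, $S_y$, $P$ is a bit cleaner than the paper's raw expansion (which, incidentally, contains a typo: the third subtracted product is written as $(x-x_2-x_3)(y-y_2-y_3)$ twice, where the last one should read $(x-x_1-x_2)(y-y_1-y_2)$), and your closing remark that the statement is just $A^{T}QA=Q$ for the $4\times4$ linear involution $A$ and $Q=\mathrm{diag}(1,-1,-1,-1)$ is a nice conceptual alternative, though not pursued in the paper.
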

\begin{proof} $Cr(x,x_i)\cdot Cr(y,y_i)=x'y'-\sum_ix_i'y_i'$
\newline$=(2x-x_1-x_2-x_3)(2y-y_1-y_2-y_3)-(x-x_2-x_3)(y-y_2-y_3)-(x-x_1-x_3)(y-y_1-y_3)-(x-x_2-x_3)(y-y_2-y_3)-\sum_{i>3}x_iy_i$
\[=xy-x_1y_1-x_2y_2-x_3y_3-\sum_{i>3}x_iy_i\]
\[=xy-\sum_{i}x_iy_i=(x,x_i)\cdot(y,y_i).\]
\end{proof}

The following three sets will also be useful:

\begin{definition}$F=\{(d,d_i) | (d,d_i)\cdot(-K+(d,d_i))\geq0, \quad d,d_i\in \Z\}. $\end{definition}
\begin{definition}$F^{+}=\{(d,d_i) | (d,d_i)\in F, \quad d,d_i\geq0\}.$\end{definition}
\begin{definition}$E=\{(d,d_i) | (d,d_i)\cdot(d,d_i)\geq-1, -K\cdot(d,d_i)=1, \quad d,d_i \in \Z\}.$\end{definition}


Also observe: 

\begin{remark}$Cr(F)\subset F$ and $Cr(E)\subset E$. Additionally, $F,F^{+}, \text{and }  E$ are invariant under permutations of $d_i$.\end{remark}
\begin{remark}$(0,-1,0,\cdots,0)\in E.$\end{remark}

Moreover, also define:

\begin{definition}\label{C} Let $C$ be the set of $(x,x_i)$ such that $x,x_i \in \Z$ and \begin{enumerate}[a)]
\item$ (x,x_i)\cdot(x,x_i)\geq0$
\item$(x,x_i)\cdot(d,d_i)\geq0 \quad \forall (d,d_i)\in E.$\end{enumerate}\end{definition}

Both Li-Li \cite{LL} and Mcduff-Schlenk \cite{MS} have found that compositions of Cremona transformations and permutations can reduce certain classes. Here we prove a combinatorial version of those lemmas.
\begin{lemma}\label{3} If $(x,x_i)\in C$ then by a sequence of Cremona transforms and permutations of $x_i's$ we can transform $(x,x_i)$ to $(x',x_i')$ where $(x',x_i')$ is reduced.\end{lemma}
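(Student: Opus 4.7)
The plan is to induct on the first coordinate $x$, using the Cremona transform as the main reduction step. Before starting the induction, I first verify that any $(x, x_i) \in C$ is automatically positive. Pairing $(x, x_i)$ with $(0, \ldots, -1, \ldots, 0) \in E$ (valid in every position since $E$ is invariant under permutations of the $d_i$) gives $x_i \geq 0$ for all $i$. For non-negativity of $x$, the class $(1, 1, 1, 0, \ldots, 0)$ with two $1$'s among the $d_i$ also lies in $E$---either by direct verification ($3-2=1$ and $1-2=-1$), or by observing that it equals $Cr(0,0,0,-1,0,\ldots)$ and $Cr(E) \subset E$. Pairing $(x,x_i)$ with its permutations then yields $x \geq x_i + x_j$ for all $i \neq j$, whence $x \geq x_1 + x_2 \geq 0$.

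With positivity established, I permute to make $(x, x_i)$ ordered. If already $x \geq x_1 + x_2 + x_3$, the class is reduced and we are done. Otherwise $x_1 + x_2 + x_3 > x$, and I apply $Cr$ to obtain $(x', x_i')$ with $x' = 2x - x_1 - x_2 - x_3 < x$. That $(x', x_i') \in C$ follows from Lemma~\ref{2}: the self-intersection is preserved, and for any $(d, d_i) \in E$, writing $(d, d_i) = Cr(Cr(d, d_i))$ together with $Cr(E) \subset E$ gives $(x', x_i') \cdot (d, d_i) = (x, x_i) \cdot Cr(d, d_i) \geq 0$. The new entries $x_1', x_2', x_3'$ have the form $x - x_j - x_k$, which are non-negative thanks to the inequality $x \geq x_j + x_k$ established above, while $x_i' = x_i \geq 0$ for $i \geq 4$. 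One then permutes the resulting class to be ordered and iterates.

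The most delicate step is showing $x' \geq 0$, so that the induction variable stays non-negative. Here I would invoke condition~(a) in the definition of $C$: $x^2 \geq \sum x_i^2 \geq x_1^2 + x_2^2 + x_3^2$, together with the Cauchy--Schwarz bound $(x_1 + x_2 + x_3)^2 \leq 3(x_1^2 + x_2^2 + x_3^2) \leq 3x^2$, so that $x_1 + x_2 + x_3 \leq \sqrt{3}\,x < 2x$ whenever $x > 0$; hence $x' > 0$. The boundary case $x=0$ forces every $x_i = 0$ via the inequality from the first paragraph, leaving the trivially reduced zero class. Since $x$ is a non-negative integer strictly decreasing at each non-trivial Cremona step, the process terminates in finitely many iterations with a reduced representative. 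The main obstacle is really this Cauchy--Schwarz argument: it is the only place where the self-intersection hypothesis enters, and it is what keeps the induction variable in the range where Cremona can be iterated productively.
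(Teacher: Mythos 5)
Your proof is correct and follows essentially the same route as the paper: establish coordinatewise non-negativity on $C$ (via pairings with $(0,\dots,-1,\dots,0)$ and permutations of $(1,1,1,0,\dots)$), show $C$ is closed under Cremona and permutations, and observe that iterated ordered Cremona strictly decreases the integer $x$, forcing termination. One remark: the Cauchy--Schwarz step is superfluous, and with it the entire appeal to condition (a) of Definition~\ref{C} --- since you have already shown $(x',x_i')\in C$, non-negativity of $x'$ (and of $x_1',x_2',x_3'$) follows directly from the positivity you proved in your first paragraph, which is exactly how the paper argues; condition (a) is only needed later, in Lemma~\ref{4}.
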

\begin{proof} We begin with some helpful results:
\begin{sublemma}\label{.1} $Cr(C)\subset C.$\end{sublemma}
\begin{proof}The fact that Cr preserves a) follows from the fact that Cr preserves the intersection product.  To complete the sublemma, note that if $(d,d_i)\in E$, then $\quad$ \[Cr(x,x_i)\cdot(d,d_i)=Cr^2(x,x_i)\cdot(d',d_i')=(x,x_i)\cdot(d',d_i')\geq0 \text{ as } (d',d_i')\in E.\]\end{proof}
\begin{sublemma}\label{.2} If $P$ is some permutation, $P(C)\subset C.$\end{sublemma}
\begin{proof} If $(d,d_i)\in E$, then \[P(x,x_i)\cdot(d,d_i)=(x,x_i)\cdot P^{-1}(d,d_i) \text{ as }P^{-1}(E)\subset E.\]\end{proof}
\begin{sublemma}\label{.3} If $(x,x_i) \in C$, then $x,x_i\geq0.$\end{sublemma}
\begin{proof} If $d_i=-\delta_{ij}, (0,d_i)\in E\text{ }$ then $j\leq \text{length}(d_i)$ for all $j$. So, $(x,x_i)\cdot(0,d_i)=x_j\geq0.$ We also have $(x,x_i)\cdot(1,1,1,0,0,\cdots,0)=x-x_1-x_2\geq0.$ As $x_1,x_2\geq0,$ this implies that $x\geq0.$\end{proof}
 Let $oCr$ denote the transformation $Cr$ followed by ordering the $d_i's$. Fix $(x,x_i)\in C.$ Let $(x^k,x_i^k)=oCr^k(x,x_i).$ Let $\alpha(k)=x^k-x_1^k-x_2^k-x_3^k.$ It suffices to show $\alpha(k)\geq0$ for some $k.$ Assume not. Then $\alpha(k)\leq-1$ for all $k$. By Sublemmas \ref{.1} and \ref{.2}, $oCr(C) \subset C$. For $k\geq1,$ \[ x^k=x^{k-1}+\alpha(k-1)\leq x^{k-1}+-1.\] Thus, there exists $k$ such that $x^k<0$. This contradicts Sublemma \ref{.3} completing the proof that we may reduce $(x,x_i)$
\end{proof}

We now prove a result analogous to \cite[Proposition 1.2.12(i)]{MS}.
\begin{lemma}\label{4} If $(x,x_i)\in C \text{ then } (x,x_i)\cdot(d,d_i)\geq0 \text{ for all } (d,d_i)\in F.$\end{lemma}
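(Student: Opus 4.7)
The plan is to combine Lemmas~\ref{1}, \ref{2}, and \ref{3} through a simultaneous reduction argument. First I would verify that the set $F$ is invariant under Cremona transforms and under permutations of the $d_i$. Permutation invariance is immediate from the definition. For Cremona invariance, a short calculation shows $Cr(-K) = -K$: applying $Cr$ to $(3,1,1,\ldots,1)$ gives $d' = 2\cdot 3 - 1 - 1 - 1 = 3$ and $d_i' = 3 - 1 - 1 = 1$ for $i = 1, 2, 3$. Combined with Lemma~\ref{2}, this yields $Cr(d,d_i) \cdot (-K + Cr(d,d_i)) = Cr(d,d_i) \cdot Cr(-K + (d,d_i)) = (d,d_i) \cdot (-K + (d,d_i)) \geq 0$, so $Cr(F) \subset F$.

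Next, by Lemma~\ref{3} I would apply a sequence of Cremonas and permutations to $(x,x_i) \in C$ to obtain a reduced class $(x',x_i')$. Applying the same sequence to $(d,d_i) \in F$ yields $(d',d_i') \in F$, and by Lemma~\ref{2} (together with the analogous invariance of the intersection product under permutations) we have $(x,x_i) \cdot (d,d_i) = (x',x_i') \cdot (d',d_i')$. So it suffices to prove the inequality assuming $(x,x_i)$ is reduced. For reduced $(x,x_i)$, my aim is to invoke Lemma~\ref{1}, whose hypotheses require $(d,d_i)$ to be positive, to satisfy $-K \cdot (d,d_i) \geq 0$, and to have $d \geq \max d_i$.

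The main obstacle is that a general $(d,d_i) \in F$ need not meet these conditions. To address this, I would perform a second reduction on $(d,d_i)$ using Cremonas and permutations of the $d_i$. Since we must apply these operations to $(x,x_i)$ simultaneously, the reduced form of $(x,x_i)$ is spoiled, but Sublemmas~\ref{.1} and \ref{.2} (from the proof of Lemma~\ref{3}) guarantee that $(x,x_i)$ stays in $C$, so by Sublemma~\ref{.3} its entries remain non-negative. I expect the cleanest finish is by induction on the integer $d$, with $(d,d_i)$ put in ordered form: when $d < d_1 + d_2 + d_3$, a Cremona strictly decreases $d$ while preserving $F$-membership, and the base case $d = 0$ forces each $d_i \in \{-1, 0\}$ from the defining inequality of $F$, so that $(x,x_i) \cdot (0, d_i) = \sum_{d_i = -1} x_i \geq 0$ by the non-negativity of the entries of any $C$-class.
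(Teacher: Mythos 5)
Your first paragraphs track the paper: showing $Cr(F)\subset F$ via $Cr(-K)=-K$ and Lemma~\ref{2}, invoking Lemma~\ref{3} to reduce $(x,x_i)$, and pushing $(d,d_i)$ through the same sequence $A$ is exactly what the paper does. The genuine gap is in your handling of the reduced situation. Once $(x',x_i')$ is reduced but $(d',d_i')$ is an arbitrary $F$-element (possibly with negative entries), the paper does \emph{not} apply further Cremonas; instead it replaces $(d',d_i')$ by $(e,e_i)$ obtained by zeroing out its negative coordinates. Since $(x',x_i')$ has non-negative entries, this can only decrease the pairing, and one checks $(e,e_i)\in F$. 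Then it splits into two cases: if $(e,e_i)\cdot(e,e_i)\geq0$, the reverse Cauchy--Schwarz inequality finishes; if $(e,e_i)\cdot(e,e_i)<0$, the $F$-inequality forces $-K\cdot(e,e_i)\geq0$ and $e\geq\max e_i$ (since the $e_i$ are non-negative integers and $\sum_i e_i^2+e_i\leq e^2+3e$), so Lemma~\ref{1} applies to the reduced $(x',x_i')$ and the now-positive $(e,e_i)$.

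Your alternative --- a second round of Cremonas applied simultaneously to both vectors, with induction on $d$ --- does not close. Two problems. First, the induction is incomplete: when the ordered $(d,d_i)$ satisfies $d\geq d_1+d_2+d_3$ with $d>0$, the Cremona no longer strictly decreases $d$, and you give no argument for this terminal state; the reduction essentially never reaches $d=0$ (e.g.\ $(1,0,\ldots,0)\in F$ is already reduced, as are many larger classes). Second, and more fundamentally, in your terminal state you can no longer invoke Lemma~\ref{1}, because the second round of Cremonas has destroyed the reducedness of $(x,x_i)$; you only retain membership in $C$ and hence non-negativity of entries via Sublemma~\ref{.3}, which is not enough to bound $(x,x_i)\cdot(d,d_i)$ when $d>0$ and the $d_i$ are positive. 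The paper's device of zeroing negative entries rather than Cremona-reducing $(d,d_i)$ is precisely what lets it keep $(x',x_i')$ reduced and then use Lemma~\ref{1} (or Cauchy--Schwarz) in the terminal case; your approach would need to replace that with some new argument for the stuck state, which is missing.
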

\begin{proof} By Lemma \ref{3} there exists $A$, a composition of $Cr$ and permutations, such that $A(x,x_i)=(x',x_i') \text{ with } (x',x_i')$ reduced. For $(d,d_i) \in F, \text{ let } A(d,d_i)=(d',d_i')\in F$. So, \[(x,x_i)\cdot(d,d_i)=A(x,x_i)\cdot A(d,d_i)=(x',x_i')\cdot(d',d_i').\]
Let $e=d, e_i=d_i \text{ if } d_i>0$ and $e_i=0 \text{ if } d_i'<0$. We note $(e,e_i)\in F \text{ and }$ \[(x',x_i')\cdot(d',d_i')\geq(x',x_i')\cdot(e,e_i).\] If $(e,e_i)\cdot(e,e_i)\geq0$ then Cauchy-Schwarz shows $(x',x_i')\cdot(e,e_i)\geq0.$
Otherwise, $(e,e_i)\cdot-K\geq0$.  Then $\sum_ie_i^2+e_i\leq e^2+3e$ implies $e\geq e_i$, so Lemma \ref{1} shows $(x',x_i')\cdot(e,e_i)\geq0.$ This completes the proof.
\end{proof}
\begin{remark} By scaling, Lemma \ref{4} extends to $(x,x_i)$ that satisfy a) and b) of Definition \ref{C} with $x,x_i \in \Q$.\end{remark}

\subsection{A key lemma}
We now use the combinatorial machinery from the previous section, together with a reduction to the ball packing problem, to prove the key lemma needed to complete the proof of Theorem~\ref{thm:13/2}, see part (iii) of Lemma~\ref{lem:keylemma} below.  

To reduce to a ball packing problem, note that proposition 1.4 in Frenkel-M\"uller \cite{FM} states that for rational $a$, 
\[E(1,a)\stackrel{s}\hookrightarrow P(\lambda,c\lambda)
\]
if and only if 
\begin{equation}
\label{eqn:frenkelmuller}
E(1,a)\sqcup B(\lambda)\sqcup B(c\lambda) \stackrel{s}\hookrightarrow B((1+c)\lambda),
\end{equation}
where $\sqcup$ denotes disjoint union.  Since, as explained in \cite{H}, one can compute the ECH capacities of the disjoint union in terms of the $\#$ operation, we know that the embedding in \eqref{eqn:frenkelmuller} exists if and only if
\begin{equation}
\label{eqn:keyequation}
N(1,a)\#N(\lambda,\lambda)\#N(c\lambda,c\lambda)\leq N((1+c)\lambda,(1+c)\lambda).
\end{equation}

For the rest of the proof of Theorem~\ref{thm:13/2}, we are looking at intervals for $a$ on which the graph of $d$ is equal to the volume obstruction; we therefore want to show that \eqref{eqn:keyequation} holds with $\lambda=\sqrt\frac{a}{2c}$ (of course for our proof one can specify $c=\frac{13}{2}$, but we state things here in slightly greater generality).  By an argument analogous to the argument used in the proof of Theorem \ref{thm: rigidity} it is sufficient to show \[(\sum_{i}d_{i}^2+d_{i})+e_{1}^2+e_1+e_{2}^2+e_2 \leq d^2+3d\]
implies
\[ \quad (\sum_{i}a_id_i)+c\lambda e_1+\lambda e_2 \leq (1+c)\lambda d\] for all $d,d_i,e_1,e_2 $ non-negative integers. Let $m_1=e_1, m_2=e_2$ and $m_i=d_{i-2}$ for $i\geq 3$ and let $w_i(a)=c\lambda, w_2(a)=\lambda$ and $w_i(a)=a_{i-2}$ for $i\geq 3$. Hence, it is enough to show \[\sum_i m_{i}^2+m_i \leq d^2+3d\]
implies
\[\quad m\cdot w(a) \leq (1+c)\lambda d.\] Let $\mu(d;m)(a)=\frac{m\cdot w(a)}{d}$. The previous condition is equivalent to $\mu(d;m)(a) \leq (1+c)\lambda$. By Lemma \ref{4} it is sufficient to check the case \begin{equation}\label{d1}\sum_i m_{i}^2 = d^2+1,\end{equation}\begin{equation}\label{d3}\sum_i m_i =3d-1.\end{equation} Let $E$ be the set of $(d;m)$ satisfying \eqref{d1} and \eqref{d3} with $d,m_i$ non-negative integers. Define $\epsilon$ by $m=\frac{d}{(1+c)\lambda}w(a)+\epsilon$.  We now have a series of lemmas, culminating in the key Lemma~\ref{lem:keylemma}.

\begin{lemma}\label{21} For $(d;m) \in E$\begin{enumerate}[(i)]
\item$\mu(d;m)(a) \leq (1+c)\lambda \sqrt{1+\frac{1}{d^2}}$ 
\item$\mu(d;m)(a)> (1+c)\lambda$ if and only if $\epsilon \cdot w > 0$
\item $\mu(d;m)(a) >(1+c)\lambda$ implies $\sum_i \epsilon_{i}^2<1$
\item Let $y(a)=a+1-2(1+c)\lambda$. \text{Then} $-\sum_i\epsilon_i=1+\frac{d}{(1+c)\lambda}(y(a)-1/q)$ \text{where} $a=\frac{p}{q}.$
\end{enumerate}\end{lemma}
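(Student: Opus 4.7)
The plan is to verify all four parts by direct algebraic manipulation once I record a single underlying identity: the squared norm of the weight vector $w = (c\lambda, \lambda, a_1, a_2, \ldots)$ is exactly $(1+c)^2\lambda^2$. This follows because $w\cdot w = (c^2+1)\lambda^2 + \sum_i a_i^2 = (c^2+1)\lambda^2 + a$, and the choice $\lambda^2 = a/(2c)$ turns this into $(c^2+2c+1)\lambda^2 = (1+c)^2\lambda^2$. With this identity in hand, the relationship between $m\cdot w$, $\epsilon\cdot w$, $\epsilon\cdot\epsilon$, and $\mu$ becomes essentially linear algebra, and each statement becomes a short computation.

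For (i), I will apply Cauchy-Schwarz: $m\cdot w \leq |m|\,|w| = \sqrt{d^2+1}\,(1+c)\lambda$, and dividing by $d$ yields $\mu(d;m)(a) \leq (1+c)\lambda\sqrt{1+1/d^2}$. For (ii), expanding $\epsilon\cdot w = m\cdot w - \frac{d}{(1+c)\lambda}\,w\cdot w$ and applying the key identity gives $\epsilon\cdot w = m\cdot w - d(1+c)\lambda$, so positivity of $\epsilon\cdot w$ is equivalent to $\mu(d;m)(a) > (1+c)\lambda$. For (iii), I will expand $\epsilon\cdot\epsilon$ similarly and use $\sum m_i^2 = d^2+1$ to obtain $\epsilon\cdot\epsilon = 2d^2 + 1 - \frac{2d}{(1+c)\lambda}\,m\cdot w$; under the hypothesis $\mu > (1+c)\lambda$ the final term exceeds $2d^2$, so $\sum \epsilon_i^2 < 1$.

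For (iv), the required input is the weight-sum identity $\sum_i a_i = a + 1 - 1/q$ from (2.3). Then $\sum_i w_i = (1+c)\lambda + (a+1-1/q)$, and using $\sum_i m_i = 3d-1$ I compute
\[
\sum_i \epsilon_i \;=\; (3d-1) - \frac{d}{(1+c)\lambda}\bigl[(1+c)\lambda + a + 1 - 1/q\bigr] \;=\; 2d - 1 - \frac{d(a+1-1/q)}{(1+c)\lambda}.
\]
Negating and rearranging so that the constant $-2(1+c)\lambda$ is absorbed into the definition $y(a) = a+1-2(1+c)\lambda$ produces exactly the stated formula.

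I do not expect any substantive obstacle: this is a bookkeeping lemma whose entire content is concentrated in the identity $|w|^2 = (1+c)^2\lambda^2$ forced by the volume-obstruction choice $\lambda = \sqrt{a/(2c)}$. The only mildly delicate point is the sharp constant $1$ appearing in (iii), which depends crucially on the exact equality $\sum m_i^2 = d^2+1$ from the definition of $E$, together with the strict inequality in the hypothesis; any slack there would weaken the bound on $\sum \epsilon_i^2$ and could compromise downstream applications.
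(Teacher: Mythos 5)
Your proposal is correct and reproduces the paper's own argument almost verbatim: all four parts hinge on the identity $w\cdot w = (1+c)^2\lambda^2$ (forced by $\lambda^2 = a/(2c)$), followed by Cauchy-Schwarz for (i) and direct expansion of $\epsilon\cdot w$, $\epsilon\cdot\epsilon$, and $\sum_i\epsilon_i$ for (ii)--(iv) using $\sum m_i^2 = d^2+1$, $\sum m_i = 3d-1$, and $\sum a_i = a+1-1/q$. Your intermediate step in (iv) is in fact cleaner than the paper's displayed line (which has a sign typo, $-3d-1$ in place of $-3d+1$, though the final formula is right), so nothing to change.
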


\begin{proof}(i) follows from $\sum_i w_{i}^2=c^2\lambda^2+\lambda^2+\sum_ia_{i}^2=(1+c)^2\lambda^2$ and Cauchy-Schwarz.
To prove (ii) note
$\epsilon \cdot w=m \cdot w -\frac{d}{(1+c)\lambda}w\cdot w$

 $=d(\frac{m\cdot w}{d}-(1+c)\lambda)$

$=d(\mu(d;m)(a)-(1+c)\lambda)$.
\newline 
To prove (iii) note $\sum_i \epsilon_{i}^2=\epsilon \cdot \epsilon=m \cdot m+\frac{d^2}{(1+c)^2\lambda^2} w\cdot w-\frac{2d}{(1+c)\lambda}m \cdot w$

$=1+d^2(2-\frac{2}{(1+c)\lambda}\frac{m\cdot w}{d})$

$<1$ if $\mu(d;m)(a)>(1+c)\lambda$.
\newline
To prove (iv) note $-\sum_i\epsilon_i=\frac{d}{(1+c)\lambda} \sum_i w_i-\sum_i m_i$

$=\frac{d}{(1+c)\lambda}(a+1-\frac{1}{q}+c\lambda+\lambda)-3d-1$

$=1+\frac{d}{(1+c)\lambda}(a+1-\frac{1}{q}-2(1+c)\lambda)$.\end{proof}

\begin{lemma} Let $(d;m) \in E$ and suppose that $I$ is the maximal nonempty open interval such that $\mu(d;m)(a)>(1+c)\lambda$ for all $a \in I$.Then there exists unique $a_0 \in I$ such that $l(a_0)=l(m)$ where $l(a_0)$ is the length of $w_i(a)$ and $l(m)$ is the number of nonzero terms in $m$. Furthermore, $l(a) \geq l(m)$ for all $a \in I$.  \end{lemma}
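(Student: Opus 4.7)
The plan is to study how $\mu(d;m)(a) = (m \cdot w(a))/d$ varies with $a$ on $I$, using the identity $\sum_i w_i(a)^2 = (1+c)^2\lambda^2$ (from the proof of Lemma~\ref{21}(i)) together with the integrality of the entries of $m$.

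First I would reduce to rational $a$. If $a$ is irrational then the continued fraction of $a$ is infinite, the weight sequence $W(1,a)$ is infinite, $l(a) = \infty$, and the inequality $l(a) \geq l(m)$ is automatic. Thus both parts of the lemma effectively concern rational $a \in I$, and any candidate $a_0$ with $l(a_0) = l(m)$ must itself be rational.

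Second, for the bound $l(a) \geq l(m)$ on $I$, I would argue by contrapositive. Suppose $a \in I$ is rational with $l(a) < l(m)$; then at least $l(m) - l(a) \geq 1$ nonzero entries $m_j$ are paired with zero coordinates of $w(a)$, so
\[
m \cdot w(a) \;=\; \sum_{i \leq l(a)} m_i\, w_i(a).
\]
Since each dropped $m_j$ is a nonzero integer with $m_j^{2} \geq 1$, \eqref{d1} gives $\sum_{i \leq l(a)} m_i^{2} \leq d^{2} + 1 - (l(m)-l(a)) \leq d^{2}$; combined with $\sum_{i \leq l(a)} w_i(a)^{2} \leq (1+c)^{2}\lambda^{2}$, Cauchy--Schwarz yields $m \cdot w(a) \leq d(1+c)\lambda$, hence $\mu(d;m)(a) \leq (1+c)\lambda$, contradicting $a \in I$.

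Third, for existence and uniqueness of $a_0 \in I$ with $l(a_0) = l(m)$, I would reconstruct $a_0$ from the nonzero pattern of $m$ via the weight-sequence recursion. Recall that $W(1,a) = (X_0^{\times \ell_0}, X_1^{\times \ell_1}, \ldots, X_k^{\times \ell_k})$, with the $X_i$ and multiplicities $\ell_i$ determined by the continued fraction of $a$ and the recursion $X_{-1}=a$, $X_0 = 1$, $X_{i+1} = X_{i-1} - \ell_i X_i$. Using the strict inequality $\mu > (1+c)\lambda$ on $I$ (so $m$ is ``nearly parallel'' to $w(a)$ in the sense of Lemma~\ref{21}(iii)), the block structure of the nonzero tail $m_3, m_4, \ldots, m_{l(m)}$ must match the block structure of multiplicities in $W(1,a_0)$; this determines the continued fraction of $a_0$, hence $a_0$ itself, giving uniqueness. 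Existence follows because Step 2 forces any rational in $I$ to have $l \geq l(m)$, while Lemma~\ref{21}(iii)--(iv) constrain the available ``slack'' so tightly that the minimum of $l(a)$ over rationals in $I$ is exactly $l(m)$, achieved at the $a_0$ described above.

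The main obstacle will be Step 3: the existence portion requires ruling out the possibility that every rational in the open interval $I$ has $l(a) > l(m)$, which means tracking how $l(a)$ jumps as $a$ crosses rationals of small denominator and verifying that the candidate $a_0$ predicted by the pattern of $m$ actually lies in $I$ and not at an endpoint (where $\mu$ would only equal $(1+c)\lambda$ rather than strictly exceed it).
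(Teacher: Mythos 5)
Your Step 2 is essentially the paper's argument for $l(a)\geq l(m)$: if $l(a)<l(m)$ then $\sum_{i\leq l(a)}m_i^2<d^2+1$, and Cauchy--Schwarz with $\sum_i w_i(a)^2=(1+c)^2\lambda^2$ gives $m\cdot w(a)\leq d(1+c)\lambda$, contradicting $a\in I$. That part is fine.

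The existence of $a_0\in I$ with $l(a_0)=l(m)$ is where your proposal has a genuine gap, and you concede as much yourself. Your Step 3 proposes to reconstruct the continued fraction of $a_0$ from the block pattern of the nonzero tail of $m$ and then to argue that the ``slack'' from Lemma~\ref{21}(iii)--(iv) forces $\min_{a\in I} l(a)=l(m)$; neither of these is carried out, and the second claim is exactly the statement to be proved, not a consequence of anything you have established. The paper (following \cite[Lemma~2.1.3]{MS}) closes this gap with a convexity argument you do not have: if $l(a)>l(m)$ for every $a\in I$, then for $3\leq i\leq l(m)$ each weight $w_i(a)$ is linear on $I$ (the $w_i$ are piecewise linear and can only break at points $a'$ with $l(a')\leq i$), so $\mu(d;m)(a)-\frac{c\lambda m_1+\lambda m_2}{d}=\frac{1}{d}\sum_{i\geq 3}m_i w_i(a)$ is linear on $I$; on the other hand the condition $\mu(d;m)(a)>(1+c)\lambda$ forces this linear function to lie strictly above $c\lambda\left(1-\frac{m_1}{d}\right)+\lambda\left(1-\frac{m_2}{d}\right)$, which is a strictly concave function of $a$ (since $\lambda=\sqrt{a/(2c)}$), while maximality of the bounded interval $I$ gives the opposite (non-strict) inequality at its endpoints --- impossible, since a linear function lying below a concave one at the endpoints lies below it throughout. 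This produces $a_0\in I$ with $l(a_0)\leq l(m)$, and your Step 2 then upgrades this to equality. Uniqueness is likewise not obtained by your ``block structure determines $a_0$'' sketch; the paper simply imports the uniqueness argument of \cite[Lemma~2.1.3]{MS}. Without the linearity-versus-concavity mechanism (or a genuine substitute), your proof of the central existence claim does not go through.
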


\begin{proof} We adapt the proof of lemma 2.1.3 in \cite{MS}. For $i \geq 3, w_i(a)$ is piecewise linear and is linear on open intervals that do not contain an element $a'$ with length $l(a') \leq i$. Therefore, if $l(a)>l(m)$ for all $a \in I, \mu(d;m)(a)-\frac{c\lambda m_1+\lambda m_2}{d}$ is linear on I. This is impossible as $c\lambda (1-\frac{m_1}{d})+\lambda (1-\frac{m_2}{d})$ is concave and $I$ is bounded. Thus there exists $a_0 \in I$ with $l(a_0)\leq l(m)$. If $l(a)<l(m)$ then $\sum_{i\leq l(a)}m_{i}^2<d^2+1$ which implies \[m\cdot w \leq||w||\sqrt{\sum_{i\leq l(a)}m_{i}^2}\leq d||w||=(1+c)\lambda d\] which is impossible for $a \in I$. The proof of uniqueness is the same as in \cite[Lemma. 2.1.3]{MS}.\end{proof}

\begin{lemma} \label{lem:p} Let $(d;m)$ be in $E$ with $\mu(d;m)(a)>(1+c)\lambda$ for some $a$. Let $J={k,...,k+s-1}$ be a block of $s\geq 2$ consecutive integers such that $w_i(a)$ is constant for $i \in J$. Then:

\begin{enumerate}[(i)]
\item  One of the following holds:
\[m_k=\cdots =m_{k+s-1} \quad or\]
\[m_k=\cdots =m_{k+s-2}=m_{k+s-1}+1 \quad or\]
\[m_k-1=m_{k+1}=\cdots =m_{k+s-1}.\]
\item There is at most one block of length $s\geq 2$ on which the $m_i$ are not all equal.
\item  If there is a block $J$ of length $s\geq 2$ on which the $m_i$ are not all equal then $\sum_{i \in J}\epsilon_{i}^2\geq \frac{s-1}{s}$.
\end{enumerate}

\end{lemma}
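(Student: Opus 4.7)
The proof plan is to derive all three parts from the single key bound $\sum_i \epsilon_i^2 < 1$ supplied by Lemma~\ref{21}(iii), combined with a short variance calculation restricted to the block $J$. Since $w_i(a)$ equals some common value $w$ on $J$, the definition of $\epsilon$ gives $\epsilon_i = m_i - \alpha$ for all $i \in J$, where $\alpha = dw/((1+c)\lambda)$ is independent of $i$ inside the block. Moreover, permuting the $m_i$'s within $J$ preserves both hypotheses, because $\sum m_i w_i$, $\sum m_i$, and $\sum m_i^2$ are unchanged when we permute inside a constant block of $w$. So up to this harmless rearrangement, I may assume that the $m_i$'s are ordered decreasingly on $J$; under this convention the three possibilities in (i) correspond respectively to "all equal," "unique smallest at position $k+s-1$," and "unique largest at position $k$."

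For (i), first suppose some pair $m_i, m_j \in J$ satisfies $m_i - m_j \geq 2$. Then
\[
\epsilon_i^2 + \epsilon_j^2 = (m_i - \alpha)^2 + (m_j - \alpha)^2 \geq \tfrac{1}{2}(m_i - m_j)^2 \geq 2,
\]
contradicting Lemma~\ref{21}(iii). Hence the $m_i$ on $J$ take at most two consecutive values $M+1$ and $M$ with multiplicities $p$ and $q = s-p$. Writing $\beta = \alpha - M$, a direct calculation gives
\[
\sum_{i \in J}\epsilon_i^2 = p(1-\beta)^2 + q\beta^2,
\]
which is minimized in $\beta$ at $\beta = p/s$ with minimum value $pq/s$. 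If both $p, q \geq 2$, then $s \geq 4$ and $pq = p(s-p) \geq 2(s-2) \geq s$, so $pq/s \geq 1$, again contradicting Lemma~\ref{21}(iii). Therefore $p \in \{0, 1, s-1, s\}$, which is exactly the trichotomy of (i) after the sort to decreasing order.

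Part (iii) is immediate from the same minimization: in the nontrivial case $p=1$ or $q=1$ we have $pq = s-1$, hence $\sum_{i \in J}\epsilon_i^2 \geq (s-1)/s$. For (ii), if two length-$\geq 2$ blocks $J_1$ and $J_2$ were both nontrivial, summing (iii) gives
\[
\sum_i \epsilon_i^2 \;\geq\; \tfrac{s_1 - 1}{s_1} + \tfrac{s_2 - 1}{s_2} \;\geq\; \tfrac{1}{2} + \tfrac{1}{2} \;=\; 1,
\]
once more contradicting Lemma~\ref{21}(iii). I do not expect any serious obstacles in this argument; the only points that require care are the bookkeeping for the WLOG reduction to decreasing order within $J$ and the inequality $pq \geq s$ for $p + q = s$ with $p, q \geq 2$, which follows from the concavity of $p \mapsto p(s-p)$ and its value $2(s-2) \geq s$ at the boundary.
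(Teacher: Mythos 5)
Your argument is correct and is in substance the same as the paper's: the paper simply defers to McDuff--Schlenk \cite[Lemma 2.1.7]{MS}, whose proof is exactly this computation from $\sum_i\epsilon_i^2<1$ (Lemma~\ref{21}(iii)) --- two block entries cannot differ by $2$, and two values with multiplicities $p,q$ force $\sum_{i\in J}\epsilon_i^2\geq pq/s$, which rules out $p,q\geq2$, gives $(s-1)/s$ in the unequal case, and gives (ii) by adding two such blocks --- so you have reconstructed the cited proof rather than taken a different route. The only step to tighten is the ``WLOG permute within $J$'': part (i) is position-sensitive, and a within-block permutation preserves the hypotheses but not the conclusion, so rather than a WLOG you should invoke the standing ordering convention (as in \cite{MS}, and available here because the weights $w_3\geq w_4\geq\cdots$ are non-increasing, so sorting the $m_i$, $i\geq3$, only increases $\mu$ and costs no generality in the global reduction) that the $m_i$ are non-increasing on the weight indices; with that in hand your trichotomy is exactly (i).
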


\begin{proof} See the proof of \cite[Lemma. 2.1.7]{MS}.  Here, McDuff and Schlenk are considering the case of an ellipsoid into a ball, but their proof generalizes without change to our situation. \end{proof}

\begin{lemma}\label{10} Let $(d;m) \in E$ be such that $\mu(d;m)>(1+c)\lambda$ for some $a$ with $l(a)=l(m)=M$. Let $w_{k+1},...,w_{k+s}$ be a block but not the first block of $w(a)$ [the first two terms of w(a) are not considered to be part of any block].\begin{enumerate}[(i)]
\item If this block is not the last block, then \[|m_k-(m_{k+1}+\cdots+m_{k+s}+m_{k+s+1})|<\sqrt{s+2}\]
If this block is the last block, then
\[|m_k-(m_{k+1}+\cdots+m_{k+s})|<\sqrt{s+1}\]
\item Always, 
\[m_k-\sum_{i=k+1}^Mm_i<\sqrt{M-k+1}\]\end{enumerate}\end{lemma}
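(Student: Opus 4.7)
My plan is to mimic the arguments from \cite{MS} that were adapted for the preceding lemmas in this section: convert both inequalities into bounds on the error vector $\epsilon$ defined by $m_i = \tfrac{d}{(1+c)\lambda}\, w_i(a) + \epsilon_i$, then invoke Cauchy--Schwarz together with the bound $\sum_i \epsilon_i^2 < 1$ supplied by Lemma \ref{21}(iii). The exclusion of the first block is precisely what makes the weight-sequence recursion applicable at index $k$: under this hypothesis the entries $w_{k+1},\dots,w_{k+s}$ all equal some $X_j$, the entry $w_k$ equals $X_{j-1}$, and $X_{j-1} = \ell_j X_j + X_{j+1}$ translates into $w_k = s w_{k+1} + w_{k+s+1}$ (with the convention $X_{K+1}=0$ in the last-block case, so the $w_{k+s+1}$ term simply drops out).

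For (i), I would substitute $m_i = \tfrac{d}{(1+c)\lambda}\, w_i + \epsilon_i$ into $m_k - (m_{k+1}+\cdots+m_{k+s}+m_{k+s+1})$ and observe that the $w$-contributions cancel exactly by the recursion above, leaving $\epsilon_k - \sum_{i=k+1}^{k+s+1}\epsilon_i$. Cauchy--Schwarz against the signed vector of length $s+2$, combined with $\sum \epsilon_i^2 < 1$, then yields the bound $\sqrt{s+2}$. The last-block case proceeds identically with the sum truncated at $k+s$ and a vector of length $s+1$, producing $\sqrt{s+1}$.

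For (ii), I would compute $\sum_{i=k+1}^M w_i$ by telescoping the identity $\ell_j X_j = X_{j-1} - X_{j+1}$ over the remaining blocks $X_j^s,\, X_{j+1}^{\ell_{j+1}},\, \dots,\, X_K^{\ell_K}$, which collapses to $\sum_{i=k+1}^M w_i = (s+1)X_j + X_{j+1} - X_K$. Comparing with $w_k = sX_j + X_{j+1}$ gives $w_k - \sum_{i=k+1}^M w_i = X_K - X_j \leq 0$ (since the $X_i$ are strictly decreasing), so the $w$-part of $m_k - \sum_{i=k+1}^M m_i$ contributes non-positively and the expression is bounded above by $\epsilon_k - \sum_{i=k+1}^M \epsilon_i$; one more Cauchy--Schwarz application against a vector of length $M-k+1$ then delivers $\sqrt{M-k+1}$.

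The main obstacle will be conceptual rather than computational: ensuring that the weight-sequence identifications $w_{k+1}=\cdots=w_{k+s}=X_j$ and $w_k = X_{j-1}$ truly hold at the index $k$ in question. This is exactly what the ``not the first block'' hypothesis secures, since the anomalous initial entries $w_1=c\lambda$ and $w_2=\lambda$ do not satisfy the weight-sequence recursion in general. Once this identification is pinned down, both parts of the lemma reduce to the three short Cauchy--Schwarz estimates sketched above.
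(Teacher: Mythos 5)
Your argument is correct and is essentially the paper's: the paper simply cites \cite[Lemma~2.1.8]{MS} and asserts the proof generalizes unchanged, and what you write out (cancellation of the $w$-parts via the weight recursion $X_{j-1}=\ell_j X_j+X_{j+1}$, then Cauchy--Schwarz against $\sum_i\epsilon_i^2<1$ from Lemma~\ref{21}(iii)) is precisely that McDuff--Schlenk argument adapted to the present setting, with the ``not the first block'' hypothesis playing the role you identify.
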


\begin{proof} Similar to the proof of Lemma \ref{lem:p}, see the proof of \cite[Lemma. 2.1.8]{MS} where McDuff and Schlenk's proof generalizes without change to our situation.\end{proof}

\begin{lemma}\label{lem:keylemma}\label{22} Assume that $(d;m) \in E$ and $\mu(d;m)(a)>(1+c)\lambda$ for some $a$ with $l(a)=l(m)$. Assume further that $y(a)>\frac{1}{q}$. Let $v_M=\frac{d}{q(1+c)}\lambda$ and let $L=l(m)$. Then: \begin{enumerate}[(i)]
\item $|\sum_i\epsilon_i|\leq \sqrt{L}$
\item$v_M>\frac{1}{3}$
\item Let $\delta=y(a)-\frac{1}{q}>0$. Then 
\[d\leq \frac{(1+c)\lambda}{\delta}(\sqrt{L}-1)\leq \frac{(1+c)\lambda}{\delta}(\sqrt{q+\floor{a}+2}-1)\] and
$\sqrt{q+\floor{a}+2} \geq 1+\delta v_Mq.$\end{enumerate}\end{lemma}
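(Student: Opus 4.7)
The plan is to prove the three statements essentially in sequence, with (i) a short Cauchy--Schwarz estimate, (iii) a rearrangement of (i) together with Lemma~\ref{21}(iv) plus an upper bound on $L$, and (ii) the main technical step, handled by a case analysis on the last block of $w(a)$.

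For (i), apply Cauchy--Schwarz to the $L$-term sum: $\left(\sum_i \epsilon_i\right)^2 \le L \cdot \sum_i \epsilon_i^2$. The standing hypothesis $\mu(d;m)(a) > (1+c)\lambda$ triggers Lemma~\ref{21}(iii), which gives $\sum_i \epsilon_i^2 < 1$, so $|\sum_i \epsilon_i| \le \sqrt{L}$.

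For (iii), start from Lemma~\ref{21}(iv), which rearranges to $-\sum_i \epsilon_i = 1 + \frac{d\delta}{(1+c)\lambda}$ with $\delta = y(a) - \frac{1}{q}$. Since $\delta > 0$, the right-hand side is positive, and combining with (i) gives $1 + \frac{d\delta}{(1+c)\lambda} \le \sqrt{L}$, which rearranges to the first inequality $d \le \frac{(1+c)\lambda}{\delta}(\sqrt{L} - 1)$. The second inequality follows from the bound $L \le q + \floor{a} + 2$: recall that $w(a)$ has two prefix entries $(c\lambda, \lambda)$ together with the weight expansion of $a = p/q$, and the latter has total length $\ell_0 + \ell_1 + \cdots + \ell_k \le \floor{a} + q$ (a property of continued fractions, provable by induction on $q$ from the observation that $\ell_1 + \cdots + \ell_k \le q$). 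For the final inequality $\sqrt{q + \floor{a} + 2} \ge 1 + \delta v_M q$, note $\delta v_M q = \frac{d\delta}{(1+c)\lambda}$, so this is a direct restatement of the bound just established.

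For (ii), the plan is to exploit the structural information from Lemma~\ref{lem:p} about the last block of $w(a)$. Since $l(m) = L$, the final coordinate satisfies $m_L \ge 1$; since the last block of $w(a)$ has constant value $1/q$ and multiplicity $\ell_k \ge 2$, Lemma~\ref{lem:p}(i) forces the corresponding coordinates of $m$ to be either all equal to some $n \ge 1$ or to deviate by exactly one at an endpoint. The constraint $\sum_i \epsilon_i^2 < 1$ applied locally to this block then pins $v_M$ near $n$: if $\ell_k \ge 3$ the resulting bound $v_M > 1 - 1/\sqrt{3}$ already exceeds $1/3$, and if $n \ge 2$ then $v_M > 1$. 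The main obstacle is the remaining degenerate case $\ell_k = 2$ with all $m_i = 1$ in the last block; here I would additionally exploit Lemma~\ref{lem:p}(iii) on any other non-constant block, the inter-block estimates of Lemma~\ref{10}, and the identity $-\sum_i \epsilon_i = 1 + \delta v_M q$ from Lemma~\ref{21}(iv) to force a contradiction with $v_M \le 1/3$. The careful bookkeeping in this endgame is where I expect the main difficulty to lie.
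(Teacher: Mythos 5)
Your parts (i) and (iii) are correct and are essentially the paper's own argument: (i) is Cauchy--Schwarz combined with Lemma~\ref{21}(iii), and (iii) is Lemma~\ref{21}(iv) together with (i) and the length bound $L\le q+\floor{a}+2$, which the paper imports from \cite[Sublemma 5.1.1]{MS} and you re-derive from the continued-fraction expansion; your reading $q v_M\delta=\frac{d\delta}{(1+c)\lambda}$ is the one the paper's proof actually uses, so the placement of $\lambda$ in the definition of $v_M$ in the statement is only a typo that you have silently corrected in the same way.

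The genuine gap is (ii), and it sits exactly where you say you expect difficulty. The paper disposes of (ii) by invoking the argument of \cite[Lemma 5.1.2]{MS}; your reconstruction stalls in the case $\ell_k=2$ with both last-block multiplicities equal to $1$, where the estimate $2(1-v_M)^2<1$ only yields $v_M>1-\frac{1}{\sqrt{2}}\approx 0.293$, strictly weaker than $\frac13$, and no amount of reshuffling of the data you list (Lemma~\ref{10}, Lemma~\ref{21}(iv), Lemma~\ref{lem:p}(iii) applied to other blocks) obviously closes the gap, since the constraints used so far are consistent with, say, $v_M=0.3$. The missing input is the weight recursion itself: when $\ell_k=2$ one has $X_{k-1}=\ell_k X_k=\frac{2}{q}$, so the entry of $w(a)$ immediately preceding the last block has scaled value $2v_M\le\frac23$ under the assumption $v_M\le\frac13$; because $l(m)=l(a)$, its $m$-coordinate is at least $1$, so it alone contributes $\epsilon^2\ge(1-2v_M)^2\ge\frac19$, which added to the $\ge\frac89$ coming from the two last-block entries contradicts $\sum_i\epsilon_i^2<1$ from Lemma~\ref{21}(iii). (The deviating patterns of Lemma~\ref{lem:p}(i), which you did not treat explicitly, are immediate, since an entry $\ge 2$ contributes $(2-v_M)^2>1$ by itself.) With that observation your case analysis closes and matches the cited McDuff--Schlenk argument; without it, (ii) is asserted but not proved.
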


\begin{proof} (i) follows from $\sum_i\epsilon_{i}^2<1$. (ii) follows from the same argument as \cite[Lemma. 5.1.2]{MS}. From  \cite[Sublemma. 5.1.1]{MS} $q+\floor{a}+2\geq L$ so Lemma \ref{21} implies \[\sqrt{q+\floor{a}+2} \geq \sqrt{L}\geq 1+\frac{d}{(1+c)\lambda}(y(a)-\frac{1}{q})=1+\frac{d}{(1+c)\lambda}\delta=1+qv_M\delta.\] This also shows $d\leq \frac{(1+c)\lambda}{\delta}(\sqrt{q+\floor{a}+2}-1).$\end{proof}

\section{Proof of Therorem~\ref{thm:13/2}  Part III}
With the Lemma~\ref{lem:keylemma} now shown, we can complete the proof of Theorem~\ref{thm:13/2}.  We explain the computation on various intervals seperately.

\subsection{$ [\frac{1300}{81},\frac{841}{52}]$}

We now wish to prove that $d(a,6.5)=\sqrt{\frac{a}{13}}$ for $a \in [\frac{1300}{81},\frac{841}{52}]$. Previously, we have proved \[d(\frac{1300}{81},6.5)=\frac{10}{9} \quad and \quad d(\frac{841}{52},6.5)=\frac{29}{26}.\] If $d(a,6.5)$ is not $\sqrt{\frac{a}{13}}$ on the interval $[\frac{1300}{81},\frac{841}{52}]$, then there exists $(d;m) \in E$ such that $\mu(d;m)(a)>7.5\lambda$ for some $a \in [\frac{1300}{81},\frac{841}{52}]$. So, Lemma  \ref{22} shows that there exists $a_0$ in $[\frac{1300}{81},\frac{841}{52}]$ with $\mu(d;m)(a_0)>7.5\lambda$ and $l(a_0)=l(m)$. Let $a_0=\frac{p}{q}=16+\frac{p'}{q}$. As $16<a_0<16+\frac{1}{5}$ we know $q\geq5$. For $a_0 \in [\frac{1300}{81},\frac{841}{52}], q\geq5$ we know \[\delta \geq \frac{1300}{81}+1-15\sqrt{\frac{1300}{81*13}}-\frac{1}{q}\geq \frac{31}{81}-\frac{1}{q}.\] Thus, Lemma \ref{22} shows \[\sqrt{q+18}\geq 1+(\frac{31}{81}-\frac{1}{q})\frac{1}{3}q.\] Hence, $q\leq67$. 

We also note that for $\frac{1300}{81}<a_0<\frac{841}{52}, q\geq5$ we have $\lambda\leq\sqrt{\frac{841}{52*13}}=\frac{29}{26}$ and $\delta\geq\frac{31}{81}-\frac{1}{q}\geq\frac{74}{405}$. Thus, Lemma  \ref{22} shows $d\leq \frac{7.5\cdot\frac{29}{26}}{\frac{74}{405}}(\sqrt{85}-1)<377$. Using our code (see Appendix A.2) we can reduce the possibilities for $(d;m)$ to 38 candidates. We can then use Lemma \ref{10} to reduce these 38 cases to 11 possible candidates which can easily verified to not be obstructive by simple calculations.

\subsection{$ [\frac{15028}{841},\frac{961}{52}]$}

We now will show $d(a,6.5)=\sqrt{\frac{a}{13}}$ for $a \in [\frac{15028}{841},\frac{961}{52}]$. Previously, we have proved \[d(\frac{15028}{841},6.5)=\frac{34}{29} \quad and \quad d(\frac{961}{52},6.5)=\frac{31}{26}.\] If $d(a,6.5)$ is not $\sqrt{\frac{a}{13}}$ on the interval $[\frac{15028}{841},\frac{961}{52}]$, then there exists $(d;m) \in E$ such that $\mu(d;m)(a)>7.5\lambda$ for some $a \in [\frac{15028}{841},\frac{961}{52}]$. Then Lemma  \ref{22} shows that there exists $a_0 \in [\frac{15028}{841},\frac{961}{52}]$ with $\mu(d,m)(a_0)>7.5\lambda$ and $l(a_0)=l(m)$. Let $a_0=\frac{p}{q}$ with $gcd(p,q)=1$. For $a_0 \in [\frac{15028}{841},\frac{961}{52}]$ we know \[\delta\geq \frac{15028}{841}+1-15\sqrt{\frac{15028}{841*13}}-\frac{1}{q}= \frac{1079}{841}-\frac{1}{q}.\] Thus, Lemma \ref{22} shows $\sqrt{q+19}\geq1+(\frac{1079}{841}-\frac{1}{q})\frac{q}{3}$. Hence, $q\leq11$. We can then verify these cases directly using our code (see Appendix A.2) to check these cases and we find no obstructions. 

\subsection{$ [\frac{18772}{961},\frac{1089}{52}]$}
We will now show $d(a,6.5)=\sqrt{\frac{a}{13}}$ for $a \in [\frac{18772}{961},\frac{1089}{52}]$. Previously, we have proved \[d(\frac{18772}{961},6.5)=\frac{38}{31} \quad and \quad d(\frac{1089}{52},6.5)=\frac{33}{26}.\] If $d(a,6.5)$ is not $\sqrt{\frac{a}{13}}$ on the interval $[\frac{18772}{961},\frac{1089}{52}]$, then there exists $(d;m) \in E$ such that $\mu(d;m)(a)>7.5\lambda$ for some $a \in [\frac{18772}{961},\frac{1089}{52}]$. Then Lemma \ref{22} shows that there exists $a_0 \in [\frac{18772}{961},\frac{1089}{52}]$ with $\mu(d,m)(a_0)>7.5\lambda$ and $l(a_0)=l(m)$. Let $a_0=\frac{p}{q}$ with $gcd(p,q)=1$. For $a_0 \in [\frac{18772}{961},\frac{1089}{52}]$ we know \[\delta\geq\frac{18772}{961}+1-15\sqrt{\frac{18772}{961*13}}-\frac{1}{q}=
\frac{2063}{961}-\frac{1}{q}.\] Thus, Lemma \ref{22} shows $\sqrt{q+21}\geq1+(\frac{2063}{961}-\frac{1}{q})\frac{q}{3}$. Hence, $ q\leq6$. We can then verify these cases directly using our code (see Appendix A.2) to check these cases and we find no obstructions.

\subsection{$[\frac{2548}{121},27]$}
For $a \in [\frac{2548}{121},27], \sqrt{q+29}\geq\sqrt{q+\floor{a}+2} \text{ and } \delta\geq21-15\sqrt{\frac{21}{13}}.$ Hence, Lemma \ref{22} implies 
\[\sqrt{q+29}\geq1+(21-15\sqrt{\frac{21}{13}})\frac{q}{3}\] which implies $q<8$. We can then verify these cases directly using our code (see Appendix A.2) to check these cases and we find no obstructions.

\subsection{$[27,\infty)$}
We will apply Remark \ref{sharpbound}.  As
\[\sqrt{27}\geq\frac{7.5}{\sqrt{13}}(2+\frac{7.5}{d}) \text{ for } d\geq18\]
Remark \ref{sharpbound} implies we only need to verify $N_k(1,a^2)\leq \frac{a}{13}M_k(1,6.5)$ for all $k\leq\frac{18^2}{26}+\frac{7.5\cdot18}{13}+\frac{6.5^2-13+1}{26}<25.$ For $a^2\geq27, k\leq25, N_k(1,a^2)=k\leq \sqrt{\frac{27}{13}}M_k(1,6.5)\leq\frac{a}{\sqrt{13}}M_k(1,6.5).$ This completes the proof $d(a,b)=\sqrt{\frac{a}{13}} \text{ for } a \in [27,\infty).$

\section{Conjectures}

We now present some conjectures concerning exactly when an ellipsoid embeds into a polydisc.

\subsection{Extensions of Theorem 1.1}
To consider an interesting refinement of Theorem 1.1, define $V(b)=\inf\{A: d(a,b)=\sqrt{\frac{a}{2b}} \text{ for } a\geq A\}$. Theorem 1.1 implies $V(b)\leq\frac{9}{2}(b+2+\frac{1}{b})$.
\begin{proposition} For $b\geq1$
\[V(b)\geq2b\left(\frac{2\floor{b}+2\ceil{\sqrt{2b}+\{b\}}-1}{b+\floor{b}+\ceil{\sqrt{2b}+\{b\}}-1}\right)^2.\]
\end{proposition}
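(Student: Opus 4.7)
My plan is to exhibit a single ECH-capacity obstruction that strictly dominates the volume obstruction on a small interval just below the value $A := 2b\bigl(\tfrac{2\floor{b}+2\ceil{\sqrt{2b}+\{b\}}-1}{b+\floor{b}+\ceil{\sqrt{2b}+\{b\}}-1}\bigr)^{2}$ on the right-hand side. By Theorem~\ref{them:(Frenkel-Muller)} we have $d(a,b) = \sup_{k \geq 0} N_k(1,a)/M_k(1,b)$, so exhibiting one obstructing index $k_0$ will suffice.

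Set $n = \floor{b}$, $f = \{b\}$, $c = \ceil{\sqrt{2b}+f}$, and $k_0 = 2n+2c-1$. First I would observe that the pair $(p,q) = (n+c-1,\,1)$ satisfies $(p+1)(q+1) = 2(n+c) = k_0+1$, so that $M_{k_0}(1,b) \leq b + n + c - 1$ directly from the definition of $M$. Second, whenever $a \geq k_0$ the $k_0+1$ smallest values of $i + ja$ with $i,j \in \Z_{\geq 0}$ are simply $0,1,\dots,k_0$, so $N_{k_0}(1,a) = k_0$. Combining these bounds,
\[
\frac{N_{k_0}(1,a)}{M_{k_0}(1,b)} \;\geq\; \frac{2n+2c-1}{b+n+c-1} \;=\; \sqrt{\tfrac{A}{2b}}
\qquad \text{for every } a \geq k_0,
\]
where the last equality is exactly the definition of $A$. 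Hence $d(a,b) > \sqrt{a/(2b)}$ for every $a \in [k_0, A)$, which gives $V(b) \geq A$ provided this interval is nonempty.

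The one substantive step is therefore verifying that $A > k_0$. Writing $m = n+c-1$ so that $k_0 = 2m+1$, a direct expansion yields the identity $2b(2m+1) - (b+m)^2 = 2b - (b-m)^2$, which reduces $A > k_0$ to $(b-m)^2 < 2b$. Since $b - m = f + 1 - c$, this is $(c-f-1)^2 < 2b$. The strict bound $\ceil{y} < y+1$ applied to $y = \sqrt{2b} + f$ gives $c - f - 1 < \sqrt{2b}$, and for $b \geq 1$ we have $\sqrt{2b} \geq \sqrt{2} > 1$, so $c \geq \sqrt{2b} + f \geq 1 + f$ and thus $c - f - 1 \geq 0$. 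Combining, $0 \leq c - f - 1 < \sqrt{2b}$, and squaring produces the desired strict inequality. I expect this short algebraic bookkeeping, together with the identification of the single obstructing index $k_0$, to be the only non-routine ingredient in the proof.
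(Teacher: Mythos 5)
Your proof is correct and follows essentially the same route as the paper: both test the single ECH index $k_0 = 2\floor{b}+2\ceil{\sqrt{2b}+\{b\}}-1$, use $N_{k_0}(1,a)=k_0$ for $a\ge k_0$, and bound $M_{k_0}(1,b)$ from above via the pair $(m,n)=\bigl(\floor{b}+\ceil{\sqrt{2b}+\{b\}}-1,\,1\bigr)$. You additionally supply the algebraic check that the obstruction strictly exceeds the volume bound at $a=k_0$ (equivalently $A>k_0$, so the interval $[k_0,A)$ is nonempty), a strict inequality the paper's proof asserts in its final display but does not justify.
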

\begin{proof}
\[d(2\floor{b}+2\ceil{\sqrt{2b}+\{b\}}-1,b)\geq\frac{N_{2\floor{b}+2\ceil{\sqrt{2b}+\{b\}}-1}(1,2\floor{b}+2\ceil{\sqrt{2b}+\{b\}}-1)}{M_{2\floor{b}+2\ceil{\sqrt{2b}+\{b\}}-1}(1,b)}\]
\[=\frac{2\floor{b}+2\ceil{\sqrt{2b}+\{b\}}-1}{b+\floor{b}+\ceil{\sqrt{2b}+\{b\}}-1}\]
\[>\sqrt{\frac{2\floor{b}+2\ceil{\sqrt{2b}+\{b\}}-1}{2b}}\]
This implies 
\[V(b)\geq2b\left(\frac{2\floor{b}+2\ceil{\sqrt{2b}+\{b\}}-1}{b+\floor{b}+\ceil{\sqrt{2b}+\{b\}}-1}\right)^2.\]
\end{proof}
Experimental evidence seems to suggest that for $b>1$ this bound is sharp.
\begin{conjecture}
\label{conj1}
For $b>1$
\[V(b)=2b\left(\frac{2\floor{b}+2\ceil{\sqrt{2b}+\{b\}}-1}{b+\floor{b}+\ceil{\sqrt{2b}+\{b\}}-1}\right)^2.\]
\end{conjecture}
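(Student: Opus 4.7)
The plan is to establish the complementary upper bound $V(b)\le A_0(b)$, where
\[
A_0(b) := 2b\left(\frac{2\floor{b}+2\ceil{\sqrt{2b}+\{b\}}-1}{b+\floor{b}+\ceil{\sqrt{2b}+\{b\}}-1}\right)^{2},
\]
since the reverse inequality $V(b)\ge A_0(b)$ is exactly the content of the preceding proposition. Equivalently, one must prove $d(a,b)=\sqrt{a/(2b)}$ for every $a\ge A_0(b)$. The strategy mirrors the proof of Theorem~\ref{thm:13/2} carried out in Sections~4 and~5: reduce to the ball-packing problem, apply the key obstruction lemma to bound the denominator $q$ of a hypothetical obstructive $a_0=p/q$, and then rule out the remaining finitely many candidates.

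First, by \eqref{eqn:keyequation} and the argument of Section~4.2, an obstruction at some $a\ge A_0(b)$ must come from a vector $(d;m)\in E$ with $\mu(d;m)(a)>(1+b)\lambda$, where $\lambda=\sqrt{a/(2b)}$; after passing to an $a_0$ with $l(a_0)=l(m)$, one is in position to apply Lemma~\ref{lem:keylemma}. The essential input is a lower bound on $\delta=y(a_0)-1/q$ valid for $a_0\ge A_0(b)$. The defining property of $A_0(b)$ is exactly that $\sqrt{A_0(b)/(2b)}$ equals the obstructive ratio $N_{k_0}(1,A_0(b))/M_{k_0}(1,b)$ coming from the index $k_0=2\floor{b}+2\ceil{\sqrt{2b}+\{b\}}-1$, so a direct calculation (using the fact that $(1+b)^2/(2b)<A_0(b)$ so $y$ is increasing on $[A_0(b),\infty)$) shows that $\delta$ is bounded below by an explicit positive constant. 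The inequality $\sqrt{q+\floor{a_0}+2}\ge 1+\delta v_M q$ combined with $v_M>1/3$ then forces $q\le Q(b)$ for a computable function $Q(b)$.

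Second, the tail $a\ge 9(b+1)^2/(2b)$ is already dispatched by Theorem~\ref{thm: rigidity}, so only the bounded interval $[A_0(b),\,9(b+1)^2/(2b)]$ remains. For each $q\le Q(b)$ and each $a_0=p/q$ in this interval, the block-structure constraints of Lemmas~\ref{lem:p} and~\ref{10} cut down the candidate obstructive vectors $(d;m)$ to a small list which one would eliminate by direct computation, exactly as in Sections~5.1--5.4.

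The hard part will be making this last step \emph{uniform} in $b$. For specific values such as $b=13/2$ the remaining candidates can be dispatched by a short computer search, but for arbitrary $b$ one needs a parametric family of arguments, indexed by $\floor{b}$ and $\{b\}$, that rules out each candidate on the whole interval $[A_0(b),\,9(b+1)^2/(2b)]$. A plausible route is to show that the candidate vectors organise into finitely many families determined by $k_0$, each dominated by the volume obstruction via an arithmetic identity analogous to the one that pins down $A_0(b)$; an alternative would be an induction on $\floor{b}$ carrying the case $\{b\}\in[0,1)$ as a parameter. Producing either of these uniform arguments appears to be the main technical obstacle.
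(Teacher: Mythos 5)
This statement is labeled a \emph{conjecture} in the paper, and the paper does not prove it. What the paper does prove, in the proposition immediately preceding it, is the one-sided inequality $V(b)\ge 2b\bigl(\tfrac{2\floor{b}+2\ceil{\sqrt{2b}+\{b\}}-1}{b+\floor{b}+\ceil{\sqrt{2b}+\{b\}}-1}\bigr)^2$, by exhibiting a single ECH-capacity obstruction at $a=2\floor{b}+2\ceil{\sqrt{2b}+\{b\}}-1$ that exceeds the volume bound on an interval terminating at $A_0(b)$. The authors then state explicitly that the equality is supported only by experimental evidence. So there is no ``paper's own proof'' of the statement to compare your proposal against.

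With that caveat, your proposal is a sound reading of the situation. You correctly identify that the lower bound is already established and that the open content is the upper bound $V(b)\le A_0(b)$, i.e.\ that $d(a,b)=\sqrt{a/(2b)}$ for all $a\ge A_0(b)$. Your strategy --- reduce to a ball-packing statement via the Frenkel--M\"uller decomposition \eqref{eqn:keyequation}, invoke Lemma~\ref{lem:keylemma} to cap the denominator $q$ of a hypothetical obstructive $a_0=p/q$, handle the tail $a\ge 9(b+1)^2/(2b)$ by Theorem~\ref{thm: rigidity}, and then eliminate finitely many candidate vectors $(d;m)$ --- is precisely the template Sections 4--5 use to settle the single case $b=13/2$. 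And you have put your finger on exactly the right gap: that template is executed case-by-case with explicit rationals and a computer search, and it does not obviously yield an argument uniform in $b$. The paper makes no attempt to supply such a uniform argument; that is why the statement remains a conjecture. Your proposal is therefore not a proof, but it is an accurate and honest map of what a proof would require and where the difficulty lies. If you wanted to push further, the most promising concrete step would be to make your lower bound on $\delta$ over $[A_0(b),9(b+1)^2/(2b)]$ explicit as a function of $b$, since that is what controls $Q(b)$ and hence the size of the residual finite search; even a crude but explicit $Q(b)$ would turn the remaining gap into a well-posed (if still hard) combinatorial problem.
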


\subsection{Generalizations of Theorem 1.2}
The methods used to compute the graph of $d(a,6.5)$ should extend for the most part to any b. In light of those techniques, experimental evidence, and a conjecture regarding $d(a,b)$ for b an integer by David Frenkel and Felix Schlenk relayed to us by Daniel Cristofaro-Gardiner, we offer a conjecture regarding the graph of $d(a,b)$ for $b\geq6$, see Figure $2$.

\begin{conjecture}
\label{conj2}
For $b\geq6,  d(a,b)=\sqrt{\frac{a}{2b}}$ except on the following intervals
\[d(a,b)=1 \text{ for } a\in [1,b+\floor{b}]\]
For $ k \in Z, 0\leq k<\sqrt{2b}+\{b\}$
\[d(a,b)=\frac{a}{b+\floor{b}+k} \text{ for } a\in [\alpha_k,2(\floor{b}+k)+1]\]
\[d(a,b)=\frac{2(\floor{b}+k)+1}{b+\floor{b}+k} \text{ for } a\in [2(\floor{b}+k)+1,\beta_k]\]
where $\alpha_0=b+\floor{b}, \alpha_1=\beta_0=\frac{(b+\floor{b}+1)(2\floor{b}+1)}{b+\floor{b}}, \alpha_k=\frac{(b+\floor{b}+k)^2}{2b} \text{ for } k\geq2, \beta_k=2b\left(\frac{2(\floor{b}+k)+1)}{b+\floor{b}+k}\right)^2 \text{ for } k\geq1.$\\
For integers m if $b \in [m-\frac{m}{(m+1)^2},m+\frac{1}{2+m}]$ let $b=m+\epsilon$
\[d(a,b)=\frac{ma+1}{2m^2+(2+\epsilon)m+\epsilon} \text{ for } a\in[\alpha*,2m+4]\]
\[d(a,b)=\frac{m(2m+4)+1}{2m^2+(2+\epsilon)m+\epsilon} \text{ for } a\in[2m+4,\beta*]\]
where 
$\alpha^*=\frac{1}{2(2m^3+2m^2\epsilon)}(8m^3+4m^2+8m^2\epsilon+4m^3\epsilon+\epsilon^2+2m\epsilon^2+b^2\epsilon^2-(1+m)(2m+\epsilon)\sqrt{-4m^2+8m^3+4m^4-4m\epsilon+8m^2\epsilon+4m^3\epsilon+\epsilon^2+2m\epsilon^2+m^2\epsilon^2})$
and $\beta^*=\frac{2(\epsilon+m+8m\epsilon+8m^2+20m^2\epsilon+16m^3\epsilon+16m^4+4m^4\epsilon+4m^5)}{(1+m)^2(2m+epsilon)^2}.$
\end{conjecture}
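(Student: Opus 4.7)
The plan is to mirror the structure of the proof of Theorem~\ref{thm:13/2} while introducing $b$ as a continuous parameter. First I would establish the values of $d(a,b)$ at the conjectured corner points $a = b + \lfloor b\rfloor$, $a = 2(\lfloor b\rfloor + k) + 1$, $a = \alpha_k$, and $a = \beta_k$. The lower bound on each flat portion, $d(a,b) \geq (2(\lfloor b\rfloor + k) + 1)/(b + \lfloor b\rfloor + k)$, comes from evaluating $N_l(1,a)/M_l(1,b)$ at the obstructing index $l = 2(\lfloor b\rfloor + k) + 1$, exactly as in Section~\ref{sec:linear}. The matching upper bounds reduce to verifying $N(1,a) \leq (a/\sqrt{2b})\,M(1,b)$ at each corner; here I would use Proposition~\ref{thm:ehrhart} together with the asymptotic bound~\eqref{eqn:finalbound} to reduce each corner to a finite combinatorial check. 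The real difficulty already at this stage is that the period of the Ehrhart quasi-polynomial depends on the denominator of $b$, so the computations that worked cleanly at $b = 13/2$ must be made parametric in $b$; this will almost certainly force the separate treatment of the $\alpha^*, \beta^*$ family appearing near integer $b$.

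Second, the linear (slanted) portions should then follow routinely, as in Section~\ref{sec:linear steps}: subscaling (Lemma~\ref{lem:subscaling}) combined with the corner values from step one gives the upper bound $d(a,b) \leq a/(b + \lfloor b\rfloor + k)$ on $[\alpha_k, 2(\lfloor b\rfloor + k) + 1]$, the obstructing index provides a matching lower bound, and monotonicity (Lemma~\ref{lem:monotonic}) gives the flat portions.

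Third, and by far the most involved, is proving $d(a,b) = \sqrt{a/(2b)}$ on each of the complementary intervals $[\beta_{k-1}, \alpha_k]$ and $[\beta_{\mathrm{last}}, \infty)$. Following Section~4 I would reduce to a ball-packing problem and suppose for contradiction that some $(d;m) \in E$ satisfies $\mu(d;m)(a) > (1+b)\lambda$. Lemma~\ref{lem:keylemma} then bounds $q$ in terms of $\delta = y(a) - 1/q$ via
\[
\sqrt{q + \lfloor a\rfloor + 2} \;\geq\; 1 + \delta v_M q,
\]
so one must check that on each subinterval the quantity $\alpha_k + 1 - (2\lfloor b\rfloor + 2k + 1)\sqrt{\alpha_k/(2b)} - 1/q$ is positive and sufficiently large. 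Once $q$ is bounded, a finite enumeration adapting Appendix~A.2, combined with the block-structure Lemmas~\ref{lem:p} and~\ref{10}, would rule out the surviving candidate obstructions. For the terminal interval $[\beta_{\mathrm{last}}, \infty)$ the sharper version of Remark~\ref{sharpbound} should apply directly, just as in Section~4.5.

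The main obstacle will be making the entire argument uniform in $b$. The bound on $\delta$ is delicate and depends on how $b$ sits relative to the nearest integer, so in the near-integer regime (where the $\alpha^*, \beta^*$ formulas enter) the naive $v_M > 1/3$ estimate from Lemma~\ref{lem:keylemma}(ii) will likely be too weak to force $q$ small; in that regime one probably has to track the error vector $\epsilon$ block-by-block in the spirit of \cite[\S 5]{MS} rather than relying on brute enumeration. It is also natural to attack Conjecture~\ref{conj1} in parallel, since the quantity $\sqrt{2b} + \{b\}$ that bounds the number of linear steps is exactly what should govern the sharp analogue of Theorem~\ref{thm: rigidity}, and both conjectures feed off the same Ehrhart/Cremona machinery.
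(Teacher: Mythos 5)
The statement you are attempting to prove is labelled a \emph{conjecture} in the paper, and the paper does not prove it. What the paper does establish (Proposition 6.4) is only the lower-bound half: for each conjectured piecewise formula one exhibits an obstructing index $l$ (e.g.\ $l = 2(\lfloor b\rfloor + k)+1$ for the step portions, $l = (m+1)^3$ for the near-integer $\alpha^*,\beta^*$ family) so that $N_l(1,a)/M_l(1,b)$ equals the conjectured value, hence $d(a,b)$ is at least that. Equality, i.e.\ the matching upper bound, is left open except at $b = \tfrac{13}{2}$ (Theorem~\ref{thm:13/2}).

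Your proposal correctly mirrors the architecture of the $b = \tfrac{13}{2}$ proof (corner values via Ehrhart and finite computer checks, slanted portions via subscaling and monotonicity, volume portions via reduction to a ball packing and Lemma~\ref{lem:keylemma}), and you correctly identify the genuine obstruction: none of these steps is uniform in $b$. The Ehrhart bound~\eqref{eqn:finalbound} has period depending on the denominator of $b$; the bound on $q$ coming from $\sqrt{q+\lfloor a\rfloor+2} \geq 1 + \delta v_M q$ degenerates as $\delta \to 0$, which is exactly what happens near integer $b$ and is why the $\alpha^*,\beta^*$ corrections appear; and the final ``check finitely many $(d;m)$ by computer'' step makes no sense for a continuum of $b$. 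Your suggestion to replace brute enumeration with a block-by-block analysis of $\epsilon$ in the spirit of McDuff--Schlenk \S5 is the natural idea, but it is not carried out here or in the paper, and there is no reason to expect it to be routine: the McDuff--Schlenk analysis for the ball already occupies dozens of pages for a single target. In short, your outline identifies the right tools and the right difficulty, but it is a plan rather than a proof, and the statement itself is an open problem in the paper, so there is no paper proof to compare against beyond the lower-bound Proposition~6.4.
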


\begin{figure}
\includegraphics[height = 75 mm, width = 120 mm]{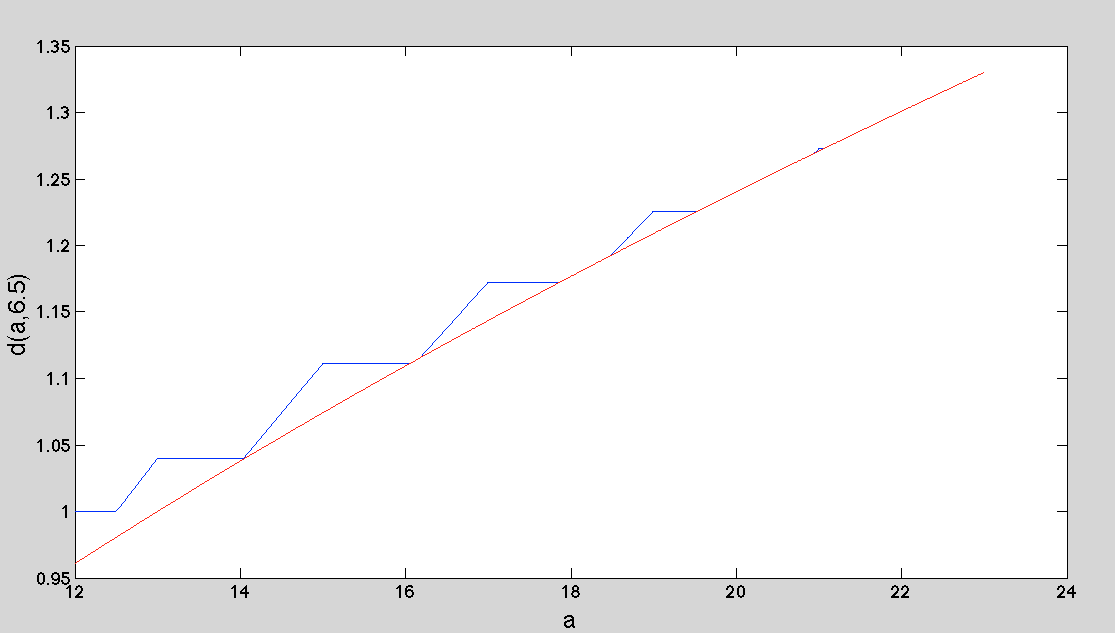}
\caption{Approximate plot of the graph of $d(a,b)$}.
\end{figure}

We note that Conjecture \ref{conj2} implies Conjecture \ref{conj1} for $b\geq6$. Furthermore, we prove that the conjecture is a lower bound for $d(a,b)$.
\begin{proposition}
 For $b\geq6,  d(a,b)\geq\sqrt{\frac{a}{2b}}$ and
\[d(a,b)\geq1 \text{ for } a\in [1,b+\floor{b}]\]
For $ k \in Z, 0\leq k<\sqrt{2b}+\{b\}$
\[d(a,b)\geq\frac{a}{b+\floor{b}+k} \text{ for } a\in [\alpha_k,2(\floor{b}+k)+1]\]
\[d(a,b)\geq\frac{2(\floor{b}+k)+1}{b+\floor{b}+k} \text{ for } a\in [2(\floor{b}+k)+1,\beta_k]\]
where $\alpha_0=b+\floor{b}, \alpha_1=\beta_0=\frac{(b+\floor{b}+1)(2\floor{b}+1)}{b+\floor{b}}, \alpha_k=\frac{(b+\floor{b}+k)^2}{2b} \text{ for } k\geq2,\beta_k=2b\left(\frac{2(\floor{b}+k)+1)}{b+\floor{b}+k}\right)^2 \text{ for } k\geq1.$\\
For integers m if $b \in [m-\frac{m}{(m+1)^2},m+\frac{1}{2+m}]$ let $b=m+\epsilon$
\[d(a,b)\geq\frac{ma+1}{2m^2+(2+\epsilon)m+\epsilon} \text{ for } a\in[\alpha*,2m+4]\]
\[d(a,b)\geq\frac{m(2m+4)+1}{2m^2+(2+\epsilon)m+\epsilon} \text{ for } a\in[2m+4,\beta*]\]
where 
$\alpha^*=\frac{1}{2(2m^3+2m^2\epsilon)}(8m^3+4m^2+8m^2\epsilon+4m^3\epsilon+\epsilon^2+2m\epsilon^2+b^2\epsilon^2-(1+m)(2m+\epsilon)\sqrt{-4m^2+8m^3+4m^4-4m\epsilon+8m^2\epsilon+4m^3\epsilon+\epsilon^2+2m\epsilon^2+m^2\epsilon^2})$
and $\beta^*=\frac{2(\epsilon+m+8m\epsilon+8m^2+20m^2\epsilon+16m^3\epsilon+16m^4+4m^4\epsilon+4m^5)}{(1+m)^2(2m+\epsilon)^2}.$
\end{proposition}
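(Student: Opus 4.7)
The plan is to combine two standard lower bounds for $d(a,b)$: the volume obstruction
\[
d(a,b) \;\geq\; \sqrt{\frac{a}{2b}},
\]
which follows from $\op{Vol}(E(1,a))=a/2 \leq b\lambda^2=\op{Vol}(P(\lambda,b\lambda))$, and the ECH-capacity obstructions
\[
d(a,b) \;\geq\; \frac{N_l(1,a)}{M_l(1,b)} \quad\text{for every } l\in\Z_{\geq 0},
\]
which follow from Theorem~\ref{them:(Frenkel-Muller)} together with the scaling $M_l(\lambda,\lambda b)=\lambda M_l(1,b)$. For each piecewise segment in the statement, the plan is to exhibit a single well-chosen index $l$, compute $N_l(1,a)$ and $M_l(1,b)$ explicitly, and verify that $N_l(1,a)/M_l(1,b)$ is at least the claimed value on the stated interval. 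The trivial bound $d(a,b)\geq 1$ is immediate from $l=1$, where $N_1(1,a)=M_1(1,b)=1$ for $a,b\geq 1$.

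For the first family of step bounds, set $l_k := 2(\floor{b}+k)+1$. First I would show $M_{l_k}(1,b)=b+\floor{b}+k$ by verifying that $(p,q)=(\floor{b}+k,1)$ minimizes $p+bq$ subject to $(p+1)(q+1)\geq l_k+1$: the $q=0$ candidate has value $l_k$ and loses because $\{b\}<1\leq k+1$, while the $q\geq 2$ candidates lose because $b\geq 6$ and $k<\sqrt{2b}+\{b\}$ imply $2b+\{b\}-k-1>0$ (the sufficient inequality for the $q=2$ comparison; higher $q$ only makes the competitor worse). Next, a direct count of nonnegative integer combinations of $1$ and $a$ yields $N_{l_k}(1,a)=a$ for $a\in(l_k-1,l_k]$ and $N_{l_k}(1,a)=l_k$ for $a\geq l_k$. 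Finally, the inequality $\alpha_k\geq l_k-1$ reduces to the identity $(b-\floor{b}-k)^2\geq 0$ for $k\geq 2$ and is checked directly for $k=0,1$, so the sloped bound $a/(b+\floor{b}+k)$ holds throughout $[\alpha_k,l_k]$ and the plateau bound $l_k/(b+\floor{b}+k)$ holds throughout $[l_k,\beta_k]$.

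For the second family, when $b=m+\epsilon$ lies near an integer $m$, I would use the index $l^* := (m+1)^3$. The pair $(p,q)=(m(m+1),m+1)$ saturates $(p+1)(q+1)=(m^2+m+1)(m+2)=l^*+1$ and gives $p+bq=(m+1)(m+b)=2m^2+(2+\epsilon)m+\epsilon$, exactly the denominator appearing in the claim. I would verify that this is the $M_{l^*}$-minimizer by comparing against the two nearest competitors $((m+1)^2,m)$ and $(m^2+2,m+2)$, which lose by $1-\epsilon$ and $\epsilon+2$ respectively (both strictly positive on the range of $b$ allowed by the hypothesis). On the $N$-side, counting lattice combinations of $1$ and $2m+4$ that do not exceed $2m^2+4m$ produces exactly $(m+1)^3$ terms, whence $N_{l^*}(1,2m+4)=m(2m+4)+1$; analogous counts give $N_{l^*}(1,a)\geq ma+1$ on $[\alpha^*,2m+4]$ and $N_{l^*}(1,a)=m(2m+4)+1$ on $[2m+4,\beta^*]$.

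The hard part is the second family. The elaborate hypothesis $b\in[m-m/(m+1)^2,\,m+1/(m+2)]$ is not what is needed to apply the ECH obstruction at $l^*$; rather, it delimits the range of $b$ on which this obstruction strictly exceeds the volume bound and the first-family steps. Indeed, at the lower endpoint $b=m-m/(m+1)^2$ the value $(m(2m+4)+1)/((m+1)(m+b))$ equals $\sqrt{(2m+4)/(2b)}$ (both evaluating to $(m+1)/m$), and a similar quadratic calculation pins down the upper endpoint. The complicated closed-form expressions for $\alpha^*$ and $\beta^*$ are then simply the roots of the quadratics that determine where the linear bound $(ma+1)/((m+1)(m+b))$ crosses the volume curve and the plateau line, respectively; once the optimizations of $M_{l^*}$ and $N_{l^*}$ are in hand, these formulas are routine to extract.
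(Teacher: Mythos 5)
Your proposal is correct and follows essentially the same approach as the paper: the volume obstruction gives $\sqrt{a/2b}$, and each remaining lower bound comes from a single ECH index, namely $l=1$, $l_k=2(\floor{b}+k)+1$, and $l^*=(m+1)^3$, with $\alpha_k,\beta_k,\alpha^*,\beta^*$ arising as the crossing points against the volume curve or the adjacent plateau. The paper merely asserts the values of $N_l$ and $M_l$ at these indices without computation; your proposal supplies the verification (the minimizing pairs $(\floor{b}+k,1)$ and $(m(m+1),m+1)$ for $M$, the lattice-point counts for $N$, and the check $\alpha_k\geq l_k-1$ via $(\{b\}-k)^2\geq 0$), and your replacement of the paper's ``$N_{l^*}(1,a)=ma+1$ on $[2m+4-\tfrac1m,2m+4]$'' with the inequality $N_{l^*}(1,a)\geq ma+1$ on all of $[\alpha^*,2m+4]$ is in fact the cleaner statement of what is needed.
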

\begin{proof}
We know that $d(a,b)\geq\sqrt{\frac{a}{2b}}$ because symplectic embeddings are volume preserving. We also have
\[d(a,b)\geq\frac{N_1(1,a)}{M_1(a,b)}=\frac{1}{1}=1.\]
Additionally, for $k \in \Z, 0\leq k<\sqrt{2b}+\{b\}, a\in [2(\floor{b}+k),2(\floor{b}+k)+1]$
\[d(a,b)\geq\frac{N_{2(\floor{b}+k)+1}(1,a)}{M_{2(\floor{b}+k)+1}(1,b)}=\frac{a}{b+\floor{b}+k}\]
\[\geq1 \text{ for } a\in[b+\floor{b},2\floor{b}+1], k=0\]
\[\geq\frac{2\floor{b}+1}{b+\floor{b}} \text { for } a\in[\frac{(b+\floor{b}+1)(2\floor{b}+1)}{b+\floor{b}},2\floor{b}+3], k=1\] 
\[\geq\sqrt{\frac{a}{2b}} \text{ for } a\in[\alpha_k,2(\floor{b}+k)+1], k\geq2.\]
We also have for $a \in [2(\floor{b}+k)+1,\infty)$
\[d(a,b)\geq\frac{N_{2(\floor{b}+k)+1}(1,a)}{M_{2(\floor{b}+k)+1}(1,b)}=\frac{2(\floor{b}+k)+1}{b+\floor{b}+k}\]

\[\geq\sqrt{\frac{a}{2b}} \text { for } a\in[2(\floor{b}+k)+1,\beta_k].\]
Furthermore, if $b\in[m-\frac{m}{(m+1)^2},m+\frac{1}{2+m}]$ for some $m \in \Z \text{ and } a\in[2m+4-\frac{1}{m},2m+4]$
\[d(a,b)\geq\frac{N_{(m+1)^3}(1,a)}{M_{(m+1)^3}(1,b)}=\frac{ma+1}{2m^2+(2+\epsilon)m+\epsilon}\]
\[\geq\sqrt{\frac{a}{2b}} \text{ for } a\in[\alpha^*,2m+4].\]
We also have for $a\in[2m+4m\beta^*]$
\[d(a,b)\geq\frac{N_{(m+1)^3}(1,a)}{M_{(m+1)^3}(1,b)}=\frac{m(2m+4)+1}{2m^2+(2+\epsilon)m+\epsilon}\]
\[\geq\sqrt{\frac{a}{2b}} \text{ for } a\in[2m+4,\beta^*].\]
This completes the proof.
\end{proof}

\appendix
\section{Appendix}

\subsection{Code that checks through terms of $N$ and $M$}
\label{app:embedcode}

The following is Matlab code that allows us to check whether $N(1,b)\leq M(c,d)$ up through $N(1,a)\leq x$ (note that for the function {\em Membed}, $d\leq c$):

\begin{verbatim}
function m=embed(b,c,d,x)
l=length(Nembed(x,b));
y=zeros(1,l); w=Nembed(x,b); t=Membed(l,c,d);
for i=1:l
    if w(i)<=(t(i)+10^-10)    
        y(i)=1;        
    else    
        m=0;        
    break
    end
end
m=min(y);

function y=Nembed(x,b);
y=zeros(1,x+2);
for i=1:x+2
    y(i)=floor((x+b-(i-1))/b);
end
M=sum(y);
z=zeros(1,(x+1)^2);
for i=1:x+1
    for j=1:x+1    
        z(i+(x+1)*(j-1))=i-1+(j-1)*b;
    end
end
l=sort(z);
y=zeros(1,M);
for i=1:M
    y(i)=l(i);    
end

function q=Membed(N,c,d)
q=zeros(1,N);
for k=0:N-1
    w=zeros(1,ceil(sqrt(k+1)-1));    
    for i=0:(ceil(sqrt(k+1)-1))    
        w(i+1)=c*i+d*(ceil((k+1)/(i+1))-1);        
    end    
    q(k+1)=min(w);    
end
\end{verbatim}

\newpage
\subsection{Mathematica Code}
\text{W[a\_] outputs the weight sequence for a.}\\
\noindent\({W[\text{a$\_$}]\text{:=}\text{Module}[\{\text{aa}=a,M,i=2,L,u,v\}},\\
 \indent {M=\text{ContinuedFraction}[\text{aa}];}\\
\indent {L=\text{Table}[1,\{j,M[[1]]\}];}\\
\indent {\{u,v\}=\{1,\text{aa}-\text{Floor}[\text{aa}]\};}\\
\indent{\text{While}[i\leq \text{Length}[M],L=\text{Join}[L,\text{Table}[v,\{j,M[[i]]\}]];}\\
\indent \indent{\{u,v\}=\{v,u-M[[i]] v\};}\\
\indent \indent{i\text{++}];}\\
\indent{\text{Return}[L]]}\\
$
{\text{GenW[a\_] outputs} $(c\lambda,\lambda)$ \text{ joined with the weight sequence of a.}}\\
{\text{GenW}[\text{a$\_$},\text{c$\_$}] \text{:=}\text{Join}[\{c*\text{Sqrt}[a/(2*c)],\text{Sqrt}[a/(2*c)]\},W[a]]}\\
{\text{P2}[\text{k$\_$}]\text{:=}\text{Module}[\{\text{kk}=k,\text{PP},\text{T0},i\},\text{T0}=\text{Table}[0,\{u,1,k\}];}\\
\indent{\text{T0p}=\text{ReplacePart}[\text{T0},1,1];}\\
\indent{\text{T11}=\text{Table}[1,\{u,1,k\}];}\\
\indent{\text{T1m}=\text{ReplacePart}[\text{T11},0,-1];}\\
\indent{\text{PP}=\{\text{T0},\text{T0p},\text{T11},\text{T1m}\};}\\
\indent{\text{Return}[\text{PP}]]}\\
{\text{Difference}[\text{M$\_$}]\text{:=}\text{Module}[\{V=M,\text{vN},\text{V1},l,L=\{\},D,\text{PP},i,j,N\},l=\text{Length}[V];}\\
\indent{\text{If}[l==1,L=\text{P2}[V[[1]]]];}\\
\indent{\text{If}[l>1,\text{vN}=V[[-1]];}\\
\indent \indent{\text{V1}=\text{Delete}[V,-1];}\\
\indent \indent{D=\text{Difference}[\text{V1}];}\\
\indent \indent{\text{PP}=\text{P2}[\text{vN}];}\\
\indent \indent{i=1;}\\
\indent \indent${\text{While}[i\leq\text{Length}[D],j=1;}\\
\indent \indent\indent{\text{While}[j\leq\text{Length}[\text{PP}],}$\\
\indent \indent\indent\indent{N=\text{Join}[ D[[i]],\text{PP}[[j]]];}\\
\indent\indent\indent\indent{L=\text{Append}[L,N];}\\
\indent\indent\indent\indent{j\text{++}];}\\
\indent\indent\indent{i\text{++}]}\\
\indent{];}\\
\indent{\text{Return}[L]]}\\
{\text{Sol}[\text{a$\_$},\text{d$\_$},\text{c$\_$}]\text{:=}\text{Module}[\{\text{aa}=a,\text{dd}=d,\text{cc}=c, A,M,F,D,i,V,L=\{\}\},}\\
\indent{A=\text{ContinuedFraction}[\text{aa}];}\\
\indent{M=\text{Join}[\{1,1\},A];}\\
\indent{F=\text{Floor}[\text{dd}/((1+\text{cc})*\text{Sqrt}[\text{aa}/(2*\text{cc})]) \text{GenW}[\text{aa},\text{cc}]];}\\
\indent{D=\text{Difference}[M];}\\
\indent{i=1;}\\
\indent${\text{While}[i\leq \text{Length}[D],V=\text{Sort}[F+D[[i]],\text{Greater}];}$\\
\indent\indent{\text{SV}=\text{Sum}[V[[j]],\{j,1,\text{Length}[V]\}];}\\
\indent\indent${\text{If}[\{\text{SV},V.V\}==\{3*\text{dd}-1,\text{dd}{}^{\wedge}2+1\}\&\&V[[-1]]>0\&\&}\\
\indent\indent\indent{\text{GenW}[\text{aa},\text{cc}].V/\text{dd}\geq ((1+\text{cc})*\text{Sqrt}[\text{aa}/(2*\text{cc})]),L=\text{Append}[L,V]];}\\
\indent\indent{i\text{++}];}\\
\indent{\text{Return}[\{\text{dd},\text{Union}[L]\}]]}\\
{\text{SolLess}[\text{a$\_$},\text{D$\_$},\text{c$\_$}]\text{:=}}
{\text{Module}[\{\text{aa}=a,\text{DD}=D,\text{cc}=c, d=1,\text{Ld},L=\{\}\},}\\
\indent{\text{While}[d\leq D,\text{Ld}=\text{Sol}[\text{aa},d,\text{cc}];}\\
\indent\indent{\text{If}[\text{Length}[\text{Ld}[[2]]]>0,L=\text{Append}[L,\text{Sol}[\text{aa},d,\text{cc}]]];}\\
\indent\indent{d\text{++}];}\\
\indent{\text{Return}[L]]}\\
{\text{Sol2}[\text{a$\_$},\text{d$\_$},\text{c$\_$}]\text{:=}\text{Module}[\{\text{aa}=a,\text{dd}=d,\text{cc}=c, A,M,F,D,i,V,L=\{\}\},}\\
\indent{A=\text{ContinuedFraction}[\text{aa}];}\\
\indent{M=\text{Join}[\{1,1\},A];}\\
\indent{F=\text{Floor}[\text{dd}/((1+\text{cc})*\text{Sqrt}[\text{aa}/(2*\text{cc})]) \text{GenW}[\text{aa},\text{cc}]];}\\
\indent{D=\text{Difference}[M];}\\
\indent{i=1;}\\
\indent{\text{While}[i\leq \text{Length}[D],V=\text{Sort}[F+D[[i]],\text{Greater}];}\\
\indent\indent{\text{SV}=\text{Sum}[V[[j]],\{j,1,\text{Length}[V]\}];}\\
\indent\indent{\text{If}[\{\text{SV},V.V\}==\{3*\text{dd}-1,\text{dd}{}^{\wedge}2+1\}\&\&}\\
\indent\indent\indent{\text{GenW}[\text{aa},\text{cc}].V/\text{dd}\geq ((1+\text{cc})*\text{Sqrt}[\text{aa}/(2*\text{cc})]+10{}^{\wedge}-10),}\\
\indent\indent{L=\text{Append}[L,V]];}\\
\indent{i\text{++}];}\\
\indent{\text{Return}[\{\text{dd},\text{Union}[L]\}]]}\\
{\text{SolLess2}[\text{a$\_$},\text{D$\_$},\text{c$\_$}]\text{:=}}
{\text{Module}[\{\text{aa}=a,\text{DD}=D,\text{cc}=c, d=1,\text{Ld},L=\{\}\},}\\
\indent{\text{While}[d\leq D,\text{Ld}=\text{Sol2}[\text{aa},d,\text{cc}];}\\
\indent\indent{\text{If}[\text{Length}[\text{Ld}[[2]]]>0,L=\text{Append}[L,\text{Sol2}[\text{aa},d,\text{cc}]]];}\\
\indent\indent{d\text{++}];}\\
\indent{\text{Return}[L]]}\\
{\text{Solutions}[\text{a$\_$},\text{b$\_$}]\text{:=}\text{Solutions}[a,b,\text{Min}[a,\text{Floor}[\text{Sqrt}[b]]]]}\\
{\text{Solutions}[\text{a$\_$},\text{b$\_$},\text{c$\_$}]\text{:=}}
{\text{Module}[\{A=a,B=b,C=c,i,m,K,j,V,L=\{\}\},\text{If}[A{}^{\wedge}2<B,L=\{\}];}\\
\indent{\text{If}[A{}^{\wedge}2==B,}\\
\indent\text{If}[A>C,L=\{\},L=\{\{A\}\}]];\\
\indent{\text{If}[A{}^{\wedge}2>B,i=1;}\\
\indent\indent{m=\text{Min}[\text{Floor}[\text{Sqrt}[B]],C];}\\
\indent\indent{\text{While}[i\leq m,K=\text{Solutions}[A-i,B-i{}^{\wedge}2,i];}\\
\indent\indent\indent{j=1;}\\
\indent\indent\indent{\text{While}[j\leq \text{Length}[K],V=\text{Prepend}[K[[j]],i];}\\
\indent\indent\indent\indent{L=\text{Append}[L,V];}\\
\indent\indent\indent{j\text{++}];}\\
\indent\indent{i\text{++}]];}\\
\indent{\text{Return}[\text{Union}[L]]]}\\
{T \text{:=}\text{Table}[0,\{u,1,18\}]}\\
{\text{T1} \text{:=}\text{ReplacePart}[T,1,1]}\\
{\text{T2} \text{:=} \text{ReplacePart}[T,1,2]}\\
{\text{T3} \text{:=}\text{ReplacePart}[T,1,3]}\\
{\text{T4}\text{:=} \text{ReplacePart}[T,-1,18]}\\
{P=\{T,\text{T3},\text{T4},\text{T1},\text{T1}+\text{T3},\text{T1}+\text{T4},\text{T2},\text{T2}+\text{T3},\text{T2}+\text{T4},\text{T1}+\text{T2},}\\
{\text{T1}+\text{T2}+\text{T3},\text{T1}+\text{T2}+\text{T4}\}}\\
{\text{a1}[\text{d$\_$}]\text{:=}\text{Floor}[d*13/15]}\\
{\text{a2}[\text{d$\_$}] \text{:=} \text{Floor}[d*2/15]}\\
{\text{a3}[\text{d$\_$}] \text{:=} \text{Round}[d*2/15/\text{Sqrt}[16.1/13]]}\\
{Q[\text{d$\_$}]\text{:=}\text{Table}[\text{a3}[d],\{u,1,18\}]}\\
{\text{Q1}[\text{d$\_$}] \text{:=}\text{ReplacePart}[Q[d],\text{a1}[d],1]}\\
{\text{Q2}[\text{d$\_$}] \text{:=} \text{ReplacePart}[\text{Q1}[d],\text{a2}[d],2]}\\
{\text{sum}[\text{L$\_$}]\text{:=}\text{Sum}[L[[j]],\{j,1,\text{Length}[L]\}]}\\
{\text{Squaresum}[\text{L$\_$}] \text{:=}\text{Sum}[(L[[j]]){}^{\wedge}2,\{j,1,\text{Length}[L]\}]}\\
{\text{Solutions2}[\text{d$\_$}] \text{:=}\text{Module}[\{\text{dd}=d,i,K\},}\\
\indent{L=\{\};}\\
\indent{\text{For}[i =1,i<13,i\text{++},}\\
\indent\indent{K=\text{SolutionsAlt}[3*\text{dd}-1-\text{sum}[\text{Q2}[\text{dd}]+P[[i]]],}\\
\indent\indent\indent{\text{dd}{}^{\wedge}2+1-\text{Squaresum}[\text{Q2}[\text{dd}]+P[[i]]],\text{Q2}[\text{dd}]+P[[i]]];}\\
\indent\indent{L=\text{Join}[L,K];}\\
\indent\indent{]}\\
\indent{]}\\
{\text{SolVerify}[\text{d$\_$}] \text{:=}\text{Module}[\{k=\text{Length}[\text{Sol}[d]],T=\text{Sol}[d],P=\{\}\},}\\
\indent{\text{If}[k\text{==}0,P=P,\text{If}[\text{Length}[T[[1]]]>1,}\\
\indent\indent{ \text{For}[i=1,i\leq k,i\text{++},}\\
\indent\indent\indent{\text{If}[T[[i]][[2]]\geq T[[i]][[3]] }\\
\indent\indent\indent\indent{\&\&\text{If}[\text{Length}[T[[i]]]\geq 19,T[[i]][[18]]\geq T[[i]][[19]],\text{True}]}\\
\indent\indent\indent\indent{\&\& \text{If}[\text{Length}[T[[i]]]\text{$>$=}20,}\\
\indent\indent\indent\indent{\text{Length}[T[[i]]]\geq 22 \&\&T[[i]][[19]]\text{$<$=}((T[[i]][[20]])+1),\text{True}]}\\
\indent\indent\indent\indent{\&\& \text{If}[\text{Length}[T[[i]]]\text{$>$=}21,T[[i]][[20]]\text{==}((T[[i]][[21]])),\text{True}]}\\
\indent\indent\indent\indent{\&\& \text{If}[\text{Length}[T[[i]]]\text{$>$=}22,T[[i]][[20]]\text{==}((T[[i]][[22]])),\text{True}]}\\
\indent\indent\indent\indent{\&\& \text{If}[\text{Length}[T[[i]]]\text{$>$=}23,T[[i]][[22]]\text{$<$=}((T[[i]][[23]])+1),\text{True}]}\\
\indent\indent\indent\indent{\&\& \text{Test4}[T[[i]]],}\\
\indent\indent\indent\indent{P=\text{Append}[P,T[[i]]]]],}\\
\indent\indent\indent{\text{If}[T[[2]]\geq T[[3]] }\\
\indent\indent\indent\indent{\&\& \text{If}[\text{Length}[T]\geq 19, T[[18]]\geq T[[19]] ,\text{True}]}\\
\indent\indent\indent\indent{\&\&\text{If}[\text{Length}[T]\geq 20, \text{Length}[T]\geq  22 \&\&T[[19]]\leq (T[[20]]+1),\text{True}]}\\
\indent\indent\indent\indent{\&\&\text{If}[\text{Length}[T]\text{$>$=}21,T[[20]]\text{==}((T[[21]])),\text{True}]}\\
\indent\indent\indent\indent{\&\& \text{If}[\text{Length}[T]\text{$>$=}22,T[[20]]\text{==}((T[[22]])),\text{True}]}\\
\indent\indent\indent\indent{\&\& \text{If}[\text{Length}[T]\text{$>$=}23,T[[22]]\text{$<$=}((T[[23]])+1),\text{True}]}\\
\indent\indent\indent\indent{\&\& \text{Test4}[T],}\\
\indent\indent\indent\indent{P=\text{Append}[P,T]]]];}\\
\indent\indent\indent\indent{\text{Return}[P]]}\\
{\text{AllSol}[\text{D$\_$}] \text{:=}\text{Module}[\{\text{DD}=D,P=\{\}\},}\\
\indent{\text{For}[j=2,j\leq \text{DD},j\text{++},}\\
\indent\indent{\text{Print}[\text{Join}[\{j\},\text{SolVerify}[j]]]]]}\\
{\text{Sol}[\text{d$\_$}]\text{:=}\text{Module}[\{e=d\},}\\
\indent{\text{Solutions2}[e];}\\
\indent{L]}\\
{\text{alpha}[\text{bb$\_$},\text{kk$\_$}]\text{:=}\text{Module}[\{b=\text{bb},k=\text{kk}\},}\\
\indent{\text{If}[k==0,2*b-1/2,}\\
\indent\indent{\left.\left.\text{If}\left[k==1, \frac{2 b (1+4 b)}{-1+4 b},\frac{(-1+4 b+2 k)^2}{8 b}\right]\right]\right]}\\
{\text{beta}[\text{bb$\_$},\text{kk$\_$}]\text{:=}\text{Module}[\{b=\text{bb},k=\text{kk}\},}\\
\indent{\left.\text{If}\left[k==0,\frac{2 b (1+4 b)}{-1+4 b},\frac{32 \left(b^3+2 b^2 k+b k^2\right)}{(-1+4 b+2 k)^2}\right]\right]}\\
{\text{SolutionsAlt}[\text{a$\_$},\text{b$\_$},\text{v$\_$}]\text{:=}}\\
\indent{\text{If}[a\geq  0\&\&b\text{$>$=}0,\text{SolutionsAlt}[a,b,\text{Min}[a,\text{Floor}[\text{Sqrt}[\text{Max}[0,b]]]],v],\{\}]}\\
{\text{SolutionsAlt}[\text{a$\_$},\text{b$\_$},\text{c$\_$},\text{v$\_$}]\text{:=}}\\
\indent{\text{Module}[\{A=a,B=\text{Max}[0,b],C=c,i,m,K,j,V,\text{vv}=v, L=\{\}\},}\\
\indent\indent{\text{If}[A{}^{\wedge}2<B,L=\{\}];}\\
\indent\indent{\text{If}[A{}^{\wedge}2==B,\text{If}[A>C \| A==0,L=\{\},L=\{\text{Join}[\text{vv},\{A\}]\}]];}\\
\indent\indent{\text{If}[A{}^{\wedge}2>B,i=1;}\\
\indent\indent\indent{m=\text{Min}[\text{Floor}[\text{Sqrt}[B]],C];}\\
\indent\indent\indent{\text{While}[i\leq m,K=\text{Solutions}[A-i,B-i{}^{\wedge}2,i];}\\
\indent\indent\indent\indent{j=1;}\\
\indent\indent\indent\indent{\text{While}[j\leq \text{Length}[K],V=\text{Prepend}[K[[j]],i];}\\
\indent\indent\indent\indent\indent{\quad V=\text{Join}[\text{vv},V];}\\
\indent\indent\indent\indent{L=\text{Append}[L,V];}\\
\indent\indent\indent\indent{j\text{++}];}\\
\indent\indent\indent{i\text{++}]];}\\
\indent{\text{Return}[L]]}\\
{\text{Test4}[\text{v$\_$}]\text{:=}\text{Module}[\{\text{vv}=v,k=0,T=v[[18]]\},}\\
\indent{\text{While}[\text{Length}[\text{vv}]\geq (19+k) \&\& (\text{vv}[[19+k]]+1)\geq \text{vv}[[19]],}\\
\indent\indent{T=(T-\text{vv}[[19+k]]);k\text{++}];}\\
\indent{T<\text{Sqrt}[k+2]]}\\
{\text{Verify2}\text{:=}\text{For}[\text{qq}=1,\text{qq}\leq 11,\text{qq}\text{++},}\\
\indent{\text{For}[\text{pp}=1,\text{pp}<2*\text{qq},\text{pp}\text{++},}\\
\indent\indent{\text{If}[\text{GCD}[\text{pp},\text{qq}]==1 \&\& 15028/841\text{$<$=}17+\text{pp}/\text{qq}\leq 961/52,}\\
\indent\indent\indent{\text{Print}[\{17+\text{pp}/\text{qq},\text{SolLess}[17+\text{pp}/\text{qq},}\\
\indent\indent\indent\indent{15/2*\text{Sqrt}[(17+\text{pp}/\text{qq})/13]/}\\
\indent\indent\indent\indent\indent{(17+(\text{pp}-1)/\text{qq}+1-15*\text{Sqrt}[(17+\text{pp}/\text{qq})/13])*}\\
\indent\indent\indent\indent(\text{Sqrt}[\text{qq}+20]-1),
{13/2]\}]]]]}\\
{\text{Verify3}\text{:=}\text{For}[\text{qq}=1,\text{qq}\leq 6,\text{qq}\text{++},}\\
\indent{\text{For}[\text{pp}=1,\text{pp}<2*\text{qq},\text{pp}\text{++},}\\
\indent\indent{\text{If}[\text{GCD}[\text{pp},\text{qq}]==1 \&\& 18772/961\text{$<$=}19+\text{pp}/\text{qq}\leq 1089/52,}\\
\indent\indent\indent{\text{Print}[\{19+\text{pp}/\text{qq},\text{SolLess}[19+\text{pp}/\text{qq},}\\
\indent\indent\indent\indent{15/2*\text{Sqrt}[(19+\text{pp}/\text{qq})/13]/}\\
\indent\indent\indent\indent\indent{(19+(\text{pp}-1)/\text{qq}+1-15*\text{Sqrt}[(19+\text{pp}/\text{qq})/13])*}\\
\indent\indent\indent\indent(\text{Sqrt}[\text{qq}+22]-1), {13/2]\}]]]]}\\
{\text{Verify4}\text{:=}\text{For} [\text{qq}=1,\text{qq}<8,\text{qq}\text{++},}\\
\indent{\text{For}[\text{pp}=1,\text{pp}\text{$<$=}6*\text{qq},\text{pp}\text{++},}\\
\indent\indent{\text{If}[\text{GCD}[\text{pp},\text{qq}]==1,}\\
\indent\indent\indent{\text{Print}[\{21+\text{pp}/\text{qq},\text{SolLess}[21+\text{pp}/\text{qq},}\\
\indent\indent\indent\indent{15/2*\text{Sqrt}[(21+\text{pp}/\text{qq})/13]/}\\
\indent\indent\indent\indent\indent{(22+(\text{pp}-1)/\text{qq}-15*\text{Sqrt}[(21+\text{pp}/\text{qq})/13])}\\
\indent\indent\indent\indent{(\text{Sqrt}[\text{qq}+29]-1),13/2]\}]]]]}
\)



\end{document}